\numberwithin{equation}{section}
\newtheorem{theorem}{Theorem}
\newtheorem{lemma}{Lemma}
\newtheorem{assumption}{Assumption}
\newtheorem{corollary}{Corollary}
\newcommand\keywords[1]{\textbf{Keywords:} #1}
\begin{document}
\title{On Random Batch Methods (RBM) for interacting particle systems driven by L\'evy processes}
\author[a]{Jian-Guo Liu\thanks{E-mail:jliu@math.duke.edu}}
\author[b]{Yuliang Wang\thanks{Email:YuliangWang$\_$math@sjtu.edu.cn}}
\affil[a]{Department of Mathematics, Department of Physics, Duke University, Durham, NC 27708, USA.}
\affil[b]{School of Mathematical Sciences, Institute of Natural Sciences, Shanghai Jiao Tong University, Shanghai, 200240, P.R.China.}

\date{}
\maketitle

\begin{abstract}
In many real-world scenarios, the underlying random fluctuations are non-Gaussian, particularly in contexts where heavy-tailed data distributions arise. A typical example of such non-Gaussian phenomena calls for L\'evy noise, which accommodates jumps and extreme variations. We propose the Random Batch Method for interacting particle systems driven by L\'evy noises (RBM-L\'evy), which can be viewed as an extension of the original RBM algorithm in [Jin et al. J Compt Phys, 2020]. In our RBM-L\'evy algorithm, $N$ particles are randomly grouped into small batches of size $p$, and interactions occur only within each batch for a short time. Then one reshuffles the particles and continues to repeat this shuffle-and-interact process. In other words, by replacing the strong interacting force by the weak interacting force, RBM-L\'evy dramatically reduces the computational cost from $O(N^2)$ to $O(pN)$ per time step. Meanwhile, the resulting dynamics converges to the original interacting particle system, even at the appearance of the L\'evy jump. We rigorously prove this convergence in Wasserstein distance, assuming either a finite or infinite second moment of the L\'evy measure. Some numerical examples are given to verify our convergence rate.
\end{abstract}

\keywords{interacting particle system, random mini-batch, L\'evy noise, heavy-tailed data}

\section{Introduction}

Interacting particle systems can effectively model various systems in both natural and social sciences, such as molecules in fluids \cite{frenkel2023understanding}, plasma \cite{birdsall2018plasma}, swarming \cite{carlen2013kinetic,carrillo2017particle,degond2017coagulation,vicsek1995novel}, chemotaxis \cite{bertozzi2012characterization,horstmann20031970}, flocking \cite{albi2013binary,cucker2007emergent,ha2009simple}, synchronization \cite{choi2011complete,ha2009simple}, consensus \cite{motsch2014heterophilious}, random vortex model \cite{majda2002vorticity}, and beyond. In the interacting particle system, each individual is called a particle, and they interact with each other according to some interacting force, which can be derived from existing physical or economical theories such as the Newton's second law. Traditionally, people add white noise with continuous trajectories to describe the uncertainty of particle movement. However, in many applications, the random fluctuation of is usually non-Gaussian, and people usually use L\'evy-type noises that allow jumps to model these phenomena \cite{ditlevsen1999observation,shkolnikov2011competing,huang2021microscopic,wang2024small,blanchet2024limit}. In this paper, we focus on efficient simulation and its rigorous convergence analysis of interacting particle systems driven by (pure jump) L\'evy processes. For the pairwise interacting case, direct evaluation of the interaction requires complexity of $O(N^2)$ per time step. Inspired by the Random Batch Method (RBM) (for the Gaussian noise case) proposed in \cite{jin2020random}, we extend this idea to the case where the noise in the system has jumps. In detail, the random shuffle-interact idea of RBM can be described as follows (also see in Algorithm \ref{al:RBM1} below): in each ($k$-th) time interval, we randomly divide $\{1,\dots,N\}$ into $\frac{N}{p}$ small batches, each with batch size $p$ ($p \ll N$, often $p=2$), denoted by $\mathcal{C}_{k,q}$ ($q=1,2, \dots, \frac{N}{p}$), and particles interact among each other within each batch. Then, the particles are reshuffled, and the process continues. Consequently, the computational cost is reduced to $O(pN)$ per time step while one may still have convergence, even in the L\'evy noise case. In this paper, we provide the first rigorous proof for the convergence guarantee of RBM with L\'evy noise and bounded interacting kernel with an explicit convergence rate. The convergence analysis is valid for both finite and infinite second-moment cases. In particular, our result is valid for the rotational invariant $\alpha$-stable case, which is applicable in various practical tasks.

Let us first introduce the basic settings of this paper.
Consider the following first-order pairwise interacting particle system driven by L\'evy processes:
\begin{equation}\label{eq:ips}
    X^{i}(t) = X^{i}(0) - \int_0^t \nabla V(X^i(s)) ds + \int_0^t \frac{1}{N-1} \sum_{j\neq i}K(X^j(s) - X^i(s)) ds +   L^{i}(t),\quad 1 \leq i\leq N,
\end{equation}
where $X^i \in \mathbb{R}^d$, $V:\mathbb{R}^d \rightarrow \mathbb{R}^d$, $K:\mathbb{R}^d \rightarrow \mathbb{R}^d$,  and $L^i(t)$ are $N$ independent L\'evy processes with L\'evy measure $\nu$ satisfying $\nu(\{0\}) = 0$ and
\begin{equation}
    \int_{\mathbb{R}^d} 1 \wedge |z|^2 \nu(dz) < \infty.
\end{equation}
Moreover, the L\'evy-It\^o decomposition gives
\begin{equation}
    L(t) = P.V. \int_0^t \int_{|z|<1} z\tilde{N}(ds,dz) + \int_0^t \int_{|z| \geq 1} z N(ds, dz),
\end{equation}
where $N(dt,dz)$ is the Poisson measure and $\tilde{N}(dt,dz):= N(dt,dz) - \mathbb{E}N(dt,dz) = N(dt,dz) - \nu(dz)dt$ is the compensated Poisson measure. Also, P.V. denotes the Cauchy principle value, and in the followings we will omit this notation for simplicity. We would refer the readers to \cite{applebaum2009levy} for more detailed introduction for the L\'evy process. We also provide some related basic knowledge in Appendix \ref{app:levy} of this paper.

As we would see in Algorithm \ref{al:RBM1}, the Random Batch Method replaces the weak interaction force $\frac{1}{N-1}K(X^j - X^i)$ between all pairs $(X^j,X^i)$ with some strong interaction force $\frac{1}{p-1}K(X^j - X^i)$ within pairs selected in a small batch with size $p$. In other words, the main modification in the RBM is replacing  $\frac{1}{N-1}\sum_{j\neq i}K\left(X^j - X^i \right)$ (the total force acting on $X^i$) with $\frac{1}{p-1}\sum_{j\in \mathcal{C},j\neq i}K\left(X^j - X^i \right)$, where $\mathcal{C} \subset \{1,\dots,N\}$ is a random batch of size $p$. 
Consequently, this replacement dramatically reduces the computational cost from $O(N^2)$ to $O(pN)$ per time step, and provides an unbiased approximation for the force / velocity field of the original interacting particle system. 
This means for any fixed deterministic sequence $(x^i)_{i=1}^N$, 
\begin{equation}\label{eq:unbiased}
\mathbb{E}\big[\frac{1}{p-1}\sum_{j\in \mathcal{C},j\neq i}K\left(x^j - x^i \right) \mid i \in \mathcal{C}\big] = \frac{1}{N-1}\sum_{j\neq i}K\left(x^j -x^i \right),
\end{equation}
where the randomness is from the random batch $\mathcal{C}$. The above consistency property is quite intuitive since conditioning on $i \in \mathcal{C}$, the probability that another index $j$ is chosen into $\mathcal{C}$ is $\tfrac{p-1}{N-1}$. This consistency property \eqref{eq:unbiased} of the random batch indicates that the modified dynamics in RBM  should be close to the original interacting particle system, and in fact it plays a significant role in our convergence analysis. 

In detail, the Random Batch Method for \eqref{eq:ips} acts as follows:
Let $\kappa > 0$ be the constant time step, $p$ be the batch size (assume that $p$ divides $N$), and denote $t_m := m\kappa$ for $m = 0,1,2,\dots$. For each $m$, divide $\{1,\dots, N \}$ into $p$ batches $\mathcal{C}_{m,1},\dots,\mathcal{C}_{m,N/p}$ randomly, for each $i$ and $q$ such that $i \in \mathcal{C}_{m,q}$, update the $\tilde{X}^i$ by solving the following SDE for $t \in [t_m,t_{m+1})$:
\begin{equation}\label{eq:rbm}
    \tilde{X}^i(t) = \tilde{X}^i(t_m) - \int_{t_m}^t \nabla V(\tilde{X}^i(s)) ds + \int_{t_m}^t \frac{1}{p-1}\sum_{j\neq i, j \in \mathcal{C}_{m,q}} K(\tilde{X}^j(s) - \tilde{X}^i(s)) ds + ( L^i(t) - L^i(t_m)).
\end{equation}
Then one reshuffles the particles and continues to repeat this shuffle-and-interact procedure. The detailed algorithm is demonstrated in Algorithm \ref{al:RBM1} below.

\begin{algorithm}[H]
\caption{RBM-L\'evy}\label{al:RBM1}
\begin{algorithmic}[1] % The [1] option enables line numbering
\For{$m \in \{0, \ldots, \frac{T}{\kappa}-1\}$}
    \State Divide $\{1, 2, \ldots, N\}$ into $\tfrac{N}{p}$ batches randomly.
    \For{each batch $\mathcal{C}_{m,q}$ ($q = 1,2,\dots, \tfrac{N}{p}$)} 
        \State Update $X^i$ (where $i \in \mathcal{C}_{m,q}$) by solving the following SDE with $t \in [t_{m}, t_{m+1}]$:
        \begin{equation*}
        dX^i = -\nabla V(X^i)dt + \frac{1}{p-1} \sum_{\substack{j \in \mathcal{C}_{m,q},  j \neq i}} K(X^j - X^i)dt +  dL^i.
        \end{equation*}
    \EndFor
\EndFor
\end{algorithmic}
\end{algorithm}

Originally proposed in \cite{jin2020random}, the Random Batch Method for systems with continuous noises has been applied to design various practical algorithms. For instance, in \cite{jin2021random, liang2022superscalability, liang2022improved, gan2024random}, the authors combined the RBM and the Ewald algorithm and proposed the so-called Random Batch Ewald (RBE) method for the molecular dynamics. Other recent extensions of RBM include its application to sampling \cite{li2020random}, interacting quantum particle systems \cite{golse2019random, jin2020randomquantum}, second-order interacting particle systems \cite{jin2022random}, multi-species stochastic interacting
particle systems \cite{jin2021convergence,daus2022random}, the Poisson-Nernst-Planck (PNP) equation \cite{li2022some}, the Landau equation \cite{carrillo2022random} and Landau-type equations \cite{du2024collision}, to name a few.  On the other hand, instead of the Brownian motion, the usage of L\'evy noise is of great significance in various dynamical systems. In fact, in many real-world scenarios, the underlying random fluctuations are non-Gaussian, particularly in contexts where heavy-tailed data distributions arise. Typical examples include the COVID-19 deaths \cite{cohen2022covid}, hurricane-related losses \cite{de2024prediction}, etc. A typical example of such non-Gaussian phenomena calls for L\'evy noise, which accommodates jumps and extreme variations. For instance, recently the authors in \cite{blanchet2024limit} used the L\'evy noise to study the fluctuation of the stochastic gradient descent (SGD) when the data (random mini-batch) has infinite variance. Also, the authors of \cite{wang2024small} studied the small-mass limit of interacting particle systems driven by L\'evy processes. Moreover, the mean-field theory for interacting particle systems driven by noises with jumps has been investigated \cite{meleard1996asymptotic,liang2021exponential}, and other properties such as the Smoluchowski-Kramers limit for SDEs driven by L\'evy noises has been studied \cite{zhang2008smoluchowski,wang2023homogenization}. However, the efficient simulation of the interacting particle systems driven by L\'evy process remains open. The main contribution of this paper is addressing this problem by applying the random batch idea to simulate and analyze this dynamics with L\'evy jumps, and providing a rigorous convergence guarantee as the time step $\kappa$ tends to zero.

At first glance, after only a few iterations, the resulted dynamics would differ a lot from the original dynamics, due to the difference between the weak interaction and the strong interaction mentioned above. However, for $\kappa \rightarrow 0$, the RBM dynamics would be close to the original one due to the law of large number in time. In detail, we prove in Theorem \ref{thm:nosecondmoment}, \ref{thm:secondmoment} that whether or not the L\'evy noise has finite second moment,  RBM always converges to the interacting particle system uniformly in time under Wasserstein distances (to be more specific, under Wasserstein-2 distance when the second moment is finite, and under Wasserstein-1 distance when the second moment is infinite).

Part of our techniques for the convergence analysis are similar to existing ones (for instance, in \cite{jin2020random,cai2024convergence}). However, due to the discontinuous nature of the L\'evy noise, the associated analysis is not very trivial. For instance, one significant challenge we overcome during the derivation in this paper is:
\begin{itemize}
    \item We address the possible singularity problem in the case when the L\'evy process does not have finite second moment. This problem shares the same nature of the singularity of $x \mapsto \frac{d^2}{dx^2}|x|^\alpha$ at $x = 0$ for $\alpha \in (1,2)$. We solve this issue by introducing a regularized ``norm" $\| \cdot \|_\epsilon = \sqrt{|\cdot|^2 + \epsilon^2}$ and choosing suitable value of $\epsilon$.
\end{itemize}
Notably, our techniques can be naturally extended to analyze other systems, such as the kinetic interacting particle systems with L\'evy noise, and systems with unequal particle masses. However, it is still unclear whether we could establish convergence for more complicated cases, for instance, the space-inhomogeneous noise, and the singular interacting kernels. We leave these as possible future work.

The rest of the paper is organized as follows. In Section \ref{sec:convergence}, we prove the convergence of RBM-L\'evy to the original interacting particle system driven by the L\'evy processes. We consider the L\'evy noise with finite second moments in Section \ref{sec:secondmoment}, and with infinite second moments in Section \ref{sec:no_second}. Some lemmas used in the main proofs are given in Section \ref{sec:lemma}. In Section \ref{sec:num}, we run some numerical tests to evaluate the RBM-L\'evy algorithm and verify our theory in Section \ref{sec:convergence}. 
%In the Appendix, we give some basics of the L\'evy process for readers convenience.

\section{Convergence anlyasis for Random Batch Methods with L\'evy noise}\label{sec:convergence}
Recall the RBM for interacting particle system with L\'evy noise defined in \eqref{eq:rbm}. In this section, under suitable assumptions for the L\'evy measure $\nu$ as well as for the forces $V(\cdot)$, $K(\cdot,\cdot)$, we prove its convergence to the interacting particle system \eqref{eq:ips} as the time step $\kappa \rightarrow 0$. 

Denote $\rho_t^{(1)}$, $\tilde{\rho}_t^{(1)}$ the first marginal distributions of \eqref{eq:ips}, \eqref{eq:rbm}, respectively, we aim to estimate the Wasserstein-$p$ distance (for $p = 1,2$)
\begin{equation}
    W_p(\rho_t^{(1)}, \tilde{\rho}_t^{(1)}).
\end{equation}
Note that for any two probability measure $\mu$, $\nu$, $W_p(\mu,\nu)$ is defined by
\begin{equation*}
    W_{p}(\mu, \nu):=\left(\inf _{\gamma \in \Pi(\mu, v)} \int_{\mathbb{R}^{d} \times \mathbb{R}^{d}}|x-y|^{p} d \gamma\right)^{1 / p}.
\end{equation*}
In our results below, we only consider $p=1,2$ for simplicity: in Section \ref{sec:secondmoment}, we assume that the L\'evy measure $\nu$ has finite second moment, and show that the error $W_2(\rho_t^{(1)}, \tilde{\rho}_t^{(1)})$ is at most $O(\sqrt{\kappa})$ uniformly in time; in Sectiom \ref{sec:no_second}, we assume that the L\'evy measure $\nu$ only has finite first moment, and show that the error $W_1(\rho_t^{(1)}, \tilde{\rho}_t^{(1)})$ is also at most $O(\sqrt{\kappa})$ uniformly in time.

\subsection{Systems driven by L\'evy processes with finite second moments}\label{sec:secondmoment}
We first consider the case where the Levy process has finite second moment. In detail, we assume:
\begin{assumption}[finite second moment]\label{ass1}
The L\'evy measure $\nu$ satisfies
\begin{equation}
    \int_{|z|\geq 1} |z|^2 \nu(dz) < \infty.
\end{equation}
\end{assumption}
Moreover, we need the following assumptions for $V$, $K$ in the drift and the initial states.
\begin{assumption}\label{ass2}
Assume that $K$ is bounded by $M_K$ and $L_K$-Lipschitz, $V$ is $\lambda_V$-strongly convex (i.e. function $x \mapsto V(x) - \frac{\lambda_V}{2}|x|^2$ is convex), $\nabla V$ is $L_V$-Lipschitz.
\end{assumption}

\begin{assumption}\label{ass:init1}
The initial states satisfy
\begin{equation}
    X^i(0) = \tilde{X}^i(0), \quad \sup_i \mathbb{E}|X^i(0)|^2 <\infty,\quad \sup_i \mathbb{E}|V(X^i(0))|^2 <\infty.
\end{equation}
\end{assumption}

\begin{theorem}\label{thm:secondmoment}
Let $\rho_t^{(1)}$, $\tilde{\rho}_t^{(1)}$ the first marginal distributions of \eqref{eq:ips}, \eqref{eq:rbm}, respectively. 
 Under Assumptions \ref{ass1}, \ref{ass2}, \ref{ass:init1}, for small time step $\kappa$, there exists a positive constant $C$ such that
\begin{equation}
   \sup_{t \geq 0} W_2(\rho_t^{(1)}, \tilde{\rho}_t^{(1)}) \leq C\kappa^{\frac{1}{2}}.
\end{equation}
\end{theorem}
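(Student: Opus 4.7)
The plan is to run a synchronous coupling argument on $|X^i-\tilde X^i|^2$, following the RBM convergence template of \cite{jin2020random} but with careful attention to the jump terms. First I would couple \eqref{eq:ips} and \eqref{eq:rbm} by driving both systems by the \emph{same} L\'evy processes $L^i$ from the same initial configuration (Assumption \ref{ass:init1}). Writing $e^i(t):=X^i(t)-\tilde X^i(t)$, the compensated and large-jump integrals cancel identically in $de^i$, so $e^i$ is absolutely continuous in $t$ even though each of $X^i$ and $\tilde X^i$ has jumps. In particular It\^o's formula applied to $|e^i|^2$ carries no jump-compensator contribution, and
\begin{equation*}
\frac{d}{dt}\mathbb{E}|e^i(t)|^2 \;=\; -2\,\mathbb{E}\langle e^i,\,\nabla V(X^i)-\nabla V(\tilde X^i)\rangle \;+\; 2\,\mathbb{E}\langle e^i,\,\mathcal{I}^i(X)-\tilde{\mathcal{I}}^i(\tilde X)\rangle,
\end{equation*}
where $\mathcal{I}^i$ and $\tilde{\mathcal{I}}^i$ denote the mean-field and random-batch interaction forces respectively.

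Next I would split the interaction difference into a stability piece and a consistency piece,
\begin{equation*}
\mathcal{I}^i(X)-\tilde{\mathcal{I}}^i(\tilde X) \;=\; \underbrace{\bigl[\mathcal{I}^i(X)-\mathcal{I}^i(\tilde X)\bigr]}_{\text{stability}} \;+\; \underbrace{\bigl[\mathcal{I}^i(\tilde X)-\tilde{\mathcal{I}}^i(\tilde X)\bigr]}_{=:\,\chi^i,\ \text{batch consistency error}}.
\end{equation*}
The $V$-contribution is bounded above by $-2\lambda_V\,\mathbb{E}|e^i|^2$ by strong convexity (Assumption \ref{ass2}). The stability part is dominated by $L_K$ times an average of the $|e^j|$, which by Cauchy--Schwarz and exchangeability in $i$ reduces to an $O(L_K)\,\mathbb{E}|e^i|^2$ contribution after averaging in the particle index.

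The main obstacle is controlling the remaining cross term $\mathbb{E}\langle e^i(t),\chi^i(t)\rangle$: a naive bound $|\chi^i|\le 2M_K$ only gives an $O(1)$ contribution per unit time, which is far too weak to produce any small parameter in $\kappa$. The unbiasedness identity \eqref{eq:unbiased} would give $\mathbb{E}[\chi^i(t_m)\mid \tilde X(t_m)]=0$ if evaluated at the left endpoint of the batch interval, but $\chi^i(t)$ depends on the running configuration $\tilde X(t)$, which is entangled with the batch draw $\mathcal{C}_{m,q}$. To decouple them I would freeze the configuration at $t_m$ and write, for $t\in[t_m,t_{m+1})$,
\begin{equation*}
\mathbb{E}\langle e^i(t),\chi^i(t)\rangle \;=\; \mathbb{E}\langle e^i(t_m),\chi^i(t_m)\rangle + \mathbb{E}\langle e^i(t)-e^i(t_m),\chi^i(t)\rangle + \mathbb{E}\langle e^i(t_m),\chi^i(t)-\chi^i(t_m)\rangle,
\end{equation*}
where the first term vanishes by unbiasedness. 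The remaining two terms are controlled by the one-step increments $\mathbb{E}|\tilde X^i(t)-\tilde X^i(t_m)|^2$ and $\mathbb{E}|e^i(t)-e^i(t_m)|^2$, both of which are $O(\kappa)$ on $[t_m,t_{m+1})$ thanks to Assumption \ref{ass1}: the compensated small-jump integral contributes $\kappa\!\int_{|z|<1}|z|^2\nu(dz)$, the large-jump integral contributes $\kappa\!\int_{|z|\ge1}|z|^2\nu(dz)$, the drift contributes $O(\kappa^2)$, and the boundedness of $K$ together with a standard a priori $L^2$ moment bound propagating Assumption \ref{ass:init1} handle the rest. The net outcome is a per-step estimate $\mathbb{E}\langle e^i,\chi^i\rangle \;\lesssim\; \sqrt{\kappa}\,\sqrt{\mathbb{E}|e^i|^2}\,+\,\kappa$.

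Finally I would assemble the three pieces into a differential inequality of the form
\begin{equation*}
\frac{d}{dt}\mathbb{E}|e^i(t)|^2 \;\le\; -2(\lambda_V-L_K)\,\mathbb{E}|e^i(t)|^2 \;+\; C\sqrt{\kappa}\,\sqrt{\mathbb{E}|e^i(t)|^2} \;+\; C\kappa,
\end{equation*}
and, in the dissipative regime where the negative linear term dominates, a standard Gr\"onwall/absorption argument delivers $\sup_{t\ge 0}\mathbb{E}|e^i(t)|^2 \lesssim \kappa$. Since $(X^i(t),\tilde X^i(t))$ is an admissible coupling of $\rho_t^{(1)}$ and $\tilde\rho_t^{(1)}$, this yields $\sup_{t\ge 0} W_2(\rho_t^{(1)},\tilde\rho_t^{(1)})\le C\kappa^{1/2}$, as claimed. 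The crux of the L\'evy-specific work lies in the third paragraph: showing that after freezing the configuration at the latest synchronization time $t_m$, the integrated batch error against $e^i$ costs only one square root of $\kappa$ per step rather than $O(1)$, which is precisely what keeps the $O(\sqrt{\kappa})$ error \emph{uniform} in time rather than letting it accumulate linearly.
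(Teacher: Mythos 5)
Your strategy coincides with the paper's: synchronous coupling with the same L\'evy noises, the stability/consistency splitting of the interaction force, the three-term decomposition of $\mathbb{E}\langle e^i(t),\chi^i(t)\rangle$ anchored at the batch-resampling time $t_m$ with the frozen term killed by the unbiasedness \eqref{eq:unbiased}, and a Gr\"onwall absorption in the dissipative regime. The architecture is sound and does deliver the theorem.

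There is, however, one internal inconsistency you should repair. In your first paragraph you correctly observe that the compensated and large-jump integrals cancel identically in $de^i$ under the synchronous coupling, so $e^i$ is driven by the (bounded plus Lipschitz) drift alone; this forces $\mathbb{E}|e^i(t)-e^i(t_m)|^2 = O(\kappa^2)$, which is exactly what the paper's Lemma \ref{lmm:Zcon} proves. Yet in your third paragraph you assert $\mathbb{E}|e^i(t)-e^i(t_m)|^2 = O(\kappa)$ and attribute to it the jump contributions $\kappa\int|z|^2\nu(dz)$ --- those contributions belong to $\mathbb{E}|\tilde X^i(t)-\tilde X^i(t_m)|^2$, not to the coupled difference. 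The distinction matters quantitatively: the cross term $\mathbb{E}\langle e^i(t)-e^i(t_m),\chi^i(t)\rangle$ is bounded by $2M_K\,\mathbb{E}|e^i(t)-e^i(t_m)|$, so the $O(\kappa^2)$ bound for the squared increment yields the $O(\kappa)$ contribution your final differential inequality requires, whereas an $O(\kappa)$ squared increment would only give $O(\sqrt{\kappa})$ per unit time and degrade the conclusion to $W_2\lesssim \kappa^{1/4}$. With the correct order restored, your per-step estimate $\lesssim \sqrt{\kappa}\,\sqrt{\mathbb{E}|e^i|^2}+\kappa$ holds and the Young/Gr\"onwall step closes. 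One further remark: for $\mathbb{E}\langle e^i(t_m),\chi^i(t)-\chi^i(t_m)\rangle$ your Cauchy--Schwarz against $\mathbb{E}|\tilde X^i(t)-\tilde X^i(t_m)|^2=O(\kappa)$ gives $\sqrt{\kappa}\,\sqrt{\mathbb{E}|e^i|^2}$, which suffices here after absorption; the paper's Lemma \ref{lmm:mathcalX} sharpens this to $\kappa\,(\mathbb{E}|Z^1(t_m)|^2)^{1/2}$ by conditioning on $\mathcal{F}_{t_m}$ and exploiting that the compensated jump increment has zero conditional mean while the large-jump increment has conditional mean of order $\kappa$ --- a refinement not needed for the $W_2$ rate, but it is the mechanism reused in the infinite-second-moment case, so it is worth keeping in view.
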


\begin{proof}
We consider the synchronous coupling (same L\'evy noise): for $1 \leq i\leq N$,
\begin{equation}\label{eq:ips1}
    X^{i}(t) = X^{i}(0) - \int_0^t \nabla V(X^i(s)) ds + \int_0^t \frac{1}{N-1} \sum_{j\neq i}K(X^j(s) - X^i(s)) ds + L^{i}(t),
\end{equation}

\begin{equation}\label{eq:rbm1}
    \tilde{X}^i(t) = \tilde{X}^i(0) - \int_{0}^t \nabla V(\tilde{X}^i(s)) ds + \int_0^t \frac{1}{p-1}\sum_{j\neq i, j \in \mathcal{C}_{m,q(i)}} K(\tilde{X}^j(s) - \tilde{X}^i(s)) ds +   L^i(t).
\end{equation}
Recall that $q = 1,2,\dots, \frac{N}{p}$ in Algorithm \ref{al:RBM1}, and for each index $i$, there exists a $q$ such that $i \in \mathcal{C}_{m,q}$. In \eqref{eq:rbm1} and what follows, we denote such $q$ by $q(i)$ so that one always has $i \in \mathcal{C}_{m,q(i)}$. 

Denote
\begin{equation}
    Z^i(t) := \tilde{X}^i(t) - X^i(t).
\end{equation}
Then, by exchangeability and Assumption \ref{ass2}, we have (some trivial details omitted)
\begin{equation}
    \frac{1}{2}\mathbb{E}|Z^1(t)|^2 \leq - (\lambda_V - 2L_K) \int_0^t \mathbb{E}|Z^1(s)|^2 ds + \int_0^t \mathbb{E}\left[Z^1(s) \cdot \mathcal{X}_{m,1}(\tilde{X}(s)) \right]ds,
\end{equation}
where
\begin{equation}\label{eq:mathcalXdef}
    \mathcal{X}_{m,i}(\tilde{X}(s)) := \frac{1}{p-1}\sum_{j\neq i, j \in \mathcal{C}_{m,q(i)}} K(\tilde{X}^j(s) - \tilde{X}^i(s)) - \frac{1}{N-1} \sum_{j\neq i}K(\tilde{X}^j(s) - \tilde{X}^i(s)).
\end{equation}
Denote 
\begin{equation}
    R(t) := \mathbb{E}\left[Z^1(t) \cdot \mathcal{X}_{m,1}(\tilde{X}(t)) \right].
\end{equation}
Then
\begin{equation}
\begin{aligned}
    R(t) &= \mathbb{E}\left[Z^1(t_m) \cdot \mathcal{X}_{m,1}(\tilde{X}_{t_m})\right]\\
    &\quad + \mathbb{E}\left[Z^1(t_m) \cdot (\mathcal{X}_{m,1} (\tilde{X}(t)) - \mathcal{X}_{m,1}(\tilde{X}(t_m)))\right]\\
    &\quad+\mathbb{E}\left[(Z^1(t) - Z^1(t_m)) \cdot \mathcal{X}_{m,1}(\tilde{X}(t))\right]\\
    &:=I_1 + I_2 + I_3.
\end{aligned}
\end{equation}
Clearly,
\begin{equation}\label{eq:I10}
    I_1 = 0.
\end{equation}
In fact, at $t_m$, the random batch $\mathcal{C}_{m,q(i)}$ is independent of $\tilde{X}^i(t_m)$ and $X^i(t_m)$, and since
\begin{equation*}
\mathbb{E}\left[\mathcal{X}_{m,i}\left(\tilde{X}\left(t_k\right)\right)\right] = \mathbb{E}\left[\mathbb{E}\left[\mathcal{X}_{m,i}\left(\tilde{X}\left(t_m\right)\right) \mid \tilde{X}^i(t_m),1\leq i \leq N\right]\right]=0
\end{equation*}
due to the consistency of the random batch, we know that $I_1 = 0$.

For the term $I_2$, by Lemma \ref{lmm:Zcon} below, we have
\begin{equation}
    \mathbb{E}|Z^1(t) - Z^1(t_m)|^2 \leq C\kappa^2,\quad t \in [t_m,t_m).
\end{equation}
Moreover, defining te $\mathcal{F}_{t_m}:= \sigma(\tilde{X}^i(s), X^i(s), \mathcal{C}_{k,q(i)},  k \leq m, 1\leq i \leq N, s \leq t_m)$,  we prove in Lemma \ref{lmm:mathcalX}  below that
\begin{equation}
\mathbb{E}\left[Z^1(t_m) \cdot (\mathcal{X}_{m,1} (\tilde{X}(t)) - \mathcal{X}_{m,1}(\tilde{X}(t_m)))\right] \leq C\kappa \left( \mathbb{E}|Z^1(t_m)|^2\right)^{\frac{1}{2}}.
\end{equation}
Consequently, combining with Lemma \ref{lmm:Zcon} and Young's inequality, for any $\eta>0$ to be determined, we have
\begin{equation}\label{eq:withdelta}
    I_2 \leq  C\kappa^2 + C\kappa \sqrt{\mathbb{E}|Z^1(t)|^2} \leq  (C + \frac{C}{4\eta})\kappa^2 + \eta\mathbb{E}|Z^1(t)|^2.
\end{equation}

For the term $I_3$, since $K$ is bounded by $M_K$ Assumption \ref{ass2}, we know that $\mathcal{X}_{m,1}(\tilde{X}(t))$ is bounded by $2M_K$. Therefore, using Lemma \ref{lmm:Zcon} again, we have 
\begin{equation}
    I_3 \leq C\kappa.
\end{equation}

Finally, combining the estimates for $I_1$, $I_2$, $I_3$, and choosing $\delta$ (in \eqref{eq:withdelta}) small enough such that $\eta < \lambda_V- 2L_K$, we have
\begin{equation}
    \mathbb{E}|Z^1(t)|^2 \leq \int_0^t (-C\mathbb{E}|Z^1(s)|^2 + C\kappa).
\end{equation}
Using the Gr\"onwall's inequality, and by definition of the Wasserstein-2 distance, we obtain the desired uniform-in-time error estimate:
\begin{equation}
   \sup_{t \geq 0} W_2(\rho_t^{(1)}, \tilde{\rho}_t^{(1)}) \leq C\kappa^{\frac{1}{2}}.
\end{equation}
Above, the values of the constant $C$ may vary but are all independent of the time $t$ and the particle number $N$.
\end{proof}

\subsection{Systems driven by L\'evy processes with infinite second moments}\label{sec:no_second}
In many applications, the L\'evy measure does not have finite second-order moment. For instance, if we consider instead the rotational invariant $\alpha$-stable L\'evy process with $\alpha \in (1,2)$, the corresponding RBM algorithm would still have convergence. Moreover, the convergence is still non-asymptotic, thanks to the strong convexity of the external potential $V$.

In detail, we replace Assumption \ref{ass1} by the following weaker condition:

\begin{assumption}[finite first moment]\label{ass00}
The L\'evy measure $\nu$ satisfies
\begin{equation}
    \int_{|z|\geq 1} |z| \nu(dz) < \infty.
\end{equation}
\end{assumption}

\begin{theorem}\label{thm:nosecondmoment}
Let $\rho_t^{(1)}$, $\tilde{\rho}_t^{(1)}$ the first marginal distributions of \eqref{eq:ips}, \eqref{eq:rbm}, respectively. 
 Under Assumptions \ref{ass2}, \ref{ass:init1}, \ref{ass00},  for small time step $\kappa$, there exists a positive constant $C$ such that
\begin{equation}
   \sup_{t\geq 0} W_1(\rho_t^{(1)}, \tilde{\rho}_t^{(1)}) \leq C\kappa^{\frac{1}{2}}.
\end{equation}
\end{theorem}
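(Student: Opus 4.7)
The plan is to mimic the structure of the proof of Theorem \ref{thm:secondmoment}, but to replace $\mathbb{E}|Z^1|^2$ by $\mathbb{E}\|Z^1\|_\epsilon$, where $\|z\|_\epsilon := \sqrt{|z|^2+\epsilon^2}$ is the regularized ``norm'' alluded to in the introduction. I would use the same synchronous coupling \eqref{eq:ips1}--\eqref{eq:rbm1}, under which the jump contributions cancel pathwise, so that $Z^i(t) := \tilde{X}^i(t)-X^i(t)$ has absolutely continuous paths with drift bounded by $L_V|Z^i|+2M_K$. This continuity lets me apply the classical chain rule to the smooth function $z\mapsto \|z\|_\epsilon$, bypassing It\^o's formula and sidestepping the singularity that $z\mapsto |z|$ would create at the origin --- exactly the difficulty the authors flag in the introduction.

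Applying the chain rule and using $\lambda_V$-strong convexity of $V$, $L_K$-Lipschitzness of $K$, exchangeability of indices, and the elementary identity $|Z^1|^2/\|Z^1\|_\epsilon = \|Z^1\|_\epsilon - \epsilon^2/\|Z^1\|_\epsilon \geq \|Z^1\|_\epsilon - \epsilon$, I obtain after taking expectations a differential inequality of the form
\begin{equation*}
\frac{d}{dt}\mathbb{E}\|Z^1(t)\|_\epsilon \leq -(\lambda_V-2L_K)\,\mathbb{E}\|Z^1(t)\|_\epsilon + \lambda_V\epsilon + R(t),\quad R(t):=\mathbb{E}\Bigl[\tfrac{Z^1(t)}{\|Z^1(t)\|_\epsilon}\cdot \mathcal{X}_{m,1}(\tilde{X}(t))\Bigr].
\end{equation*}
The positivity of $\lambda_V-2L_K$ (implicitly already used in Theorem \ref{thm:secondmoment}) drives the exponential relaxation, while $\lambda_V\epsilon$ is the price paid for the regularization.

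Next, I would decompose $R(t)$ exactly as in the finite-second-moment proof by freezing arguments at $t_m$: the frozen term $\mathbb{E}[\tfrac{Z^1(t_m)}{\|Z^1(t_m)\|_\epsilon}\cdot\mathcal{X}_{m,1}(\tilde{X}(t_m))]$ vanishes by the batch consistency \eqref{eq:unbiased} after conditioning on $\mathcal{F}_{t_m}$. For the increment of $\mathcal{X}_{m,1}$, Lipschitzness of $K$ reduces matters to controlling $\mathbb{E}|\tilde{X}^j(t)-\tilde{X}^j(t_m)|$, and the crucial new estimate in the infinite-second-moment regime is
\begin{equation*}
\mathbb{E}|L^j(t)-L^j(t_m)|\leq C\sqrt{\kappa},
\end{equation*}
proved by splitting jumps at $|z|=1$: for small jumps, Burkholder-Davis-Gundy on the compensated martingale uses only $\int_{|z|<1}|z|^2\nu(dz)<\infty$ and gives an $O(\sqrt{\kappa})$ bound; for large jumps, the first-moment condition of Assumption \ref{ass00} gives an $O(\kappa)$ bound. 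For the increment of $z\mapsto z/\|z\|_\epsilon$, its Jacobian is bounded by $C/\epsilon$, and since $\mathcal{X}_{m,1}$ is bounded by $2M_K$ and $Z^1$ evolves by a bounded-drift ODE, this term contributes $O(\kappa/\epsilon)$. Altogether $R(t)\leq C(\sqrt{\kappa}+\kappa/\epsilon)$.

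The main obstacle is the tension inherent in the choice of $\epsilon$: it must be small for $\|\cdot\|_\epsilon$ to faithfully approximate $|\cdot|$ (controlling $\lambda_V\epsilon$), yet large enough to tame the $1/\epsilon$ blow-up in the Lipschitz constant of $z\mapsto z/\|z\|_\epsilon$. The scaling $\epsilon=\sqrt{\kappa}$ balances the two and produces $\max(\lambda_V\epsilon,\kappa/\epsilon)=O(\sqrt{\kappa})$; this $\sqrt{\kappa}$ rate is dictated precisely by the $O(\sqrt{\kappa})$ bound on the $L^1$ increments of the L\'evy process, which is the natural replacement for the $O(\sqrt{\kappa})$ $L^2$ bound used in Theorem \ref{thm:secondmoment}. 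A Gr\"onwall argument on the differential inequality then yields $\sup_{t\geq 0}\mathbb{E}\|Z^1(t)\|_\epsilon\leq C\sqrt{\kappa}$, and since $W_1(\rho_t^{(1)},\tilde{\rho}_t^{(1)})\leq \mathbb{E}|Z^1(t)|\leq \mathbb{E}\|Z^1(t)\|_\epsilon$, the desired bound follows.
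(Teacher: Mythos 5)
Your proposal is correct and follows essentially the same route as the paper's proof: the same synchronous coupling so that the jumps cancel and $Z^1$ has absolutely continuous paths, the same regularized norm $|z|_\epsilon=\sqrt{|z|^2+\epsilon^2}$ handled by the ordinary chain rule, the same strong-convexity/exchangeability estimate producing the dissipative term, the same three-way splitting of the remainder (frozen consistency term $=0$, increment of $\mathcal{X}_{m,1}$, increment of $z\mapsto z/|z|_\epsilon$ paid at cost $C/\epsilon$), and the same balancing choice $\epsilon=\sqrt{\kappa}$ followed by Gr\"onwall. The one place you genuinely diverge is the increment of $\mathcal{X}_{m,1}$: the paper's Lemma \ref{lmm:mathcalX} asserts $\mathbb{E}|\mathcal{X}_{m,1}(\tilde{X}(t))-\mathcal{X}_{m,1}(\tilde{X}(t_m))|\leq C\kappa$, whereas you bound it through $\mathbb{E}|\tilde{X}^j(t)-\tilde{X}^j(t_m)|$, which contains the non-cancelling noise increments $L^j-L^1$ inside $\delta\tilde{K}^j$ and is therefore only $O(\sqrt{\kappa})$ (your Burkholder--Davis--Gundy splitting at $|z|=1$ under Assumption \ref{ass00} is the right way to see this). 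Your $O(\sqrt{\kappa})$ bound is the honest rate here and arguably corrects a small overstatement in the paper's lemma; in either version the conclusion is unaffected, since the dominant contribution is the $\kappa/\epsilon=\sqrt{\kappa}$ term from the regularization.
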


\begin{proof}
We still consider the synchronous coupling (same L\'evy noise): for $1 \leq i\leq N$,
\begin{equation}
    X^{i}(t) = X^{i}(0) - \int_0^t \nabla V(X^i(s)) ds + \int_0^t \frac{1}{N-1} \sum_{j\neq i}K(X^j(s) - X^i(s)) ds +  L^{i}(t),
\end{equation}

\begin{equation}
    \tilde{X}^i(t) = \tilde{X}^i(0) - \int_{0}^t \nabla V(\tilde{X}^i(s)) ds + \int_0^t \frac{1}{p-1}\sum_{j\neq i, j \in \mathcal{C}_{m,q(i)}}K(\tilde{X}^j(s) - \tilde{X}^i(s)) ds +   L^i(t),
\end{equation}
and denote
\begin{equation}
    Z^i(t) := \tilde{X}^i(t) - X^i(t).
\end{equation}
For $\epsilon >0$, define
\begin{equation}
    |x|_\epsilon := \sqrt{|x|^2 + \epsilon^2}.
\end{equation}
Then we have
\begin{equation}
\begin{aligned}
|Z^1(t)|_\epsilon &= |Z^1(0)|_{\epsilon} + \int_0^t |Z^1(s)|_{\epsilon}^{-1} Z^1(s) \cdot \Big(-\big(\nabla V(\tilde{X}^1(s)) - \nabla V(X^1(s))\big)\\
&\quad + \big(\frac{1}{p-1}\sum_{j \neq 1, j \in \mathcal{C}_{m,q(1)}} K(\tilde{X}^j(s) - \tilde{X}^1(s)) - \frac{1}{N-1} \sum_{j\neq 1} K(X^j(s) - X^1(s))\big) \Big) ds.
\end{aligned}
\end{equation}
Taking expectation, since $X^1(0) = \tilde{X}^1(0)$, and using the strongly-convexity of $V(\cdot)$, the boundedness of $K(\cdot)$, as well as the exchangeability property of the particle system, we have
\begin{equation}
    \mathbb{E}|Z^1(t)|_\epsilon \leq \epsilon + \int_0^t \mathbb{E}\left[-(\lambda_V - 2L_K) |Z^1(s)|^{-1}_\epsilon |Z^1(s)|^2 \right] ds + \int_0^t \mathbb{E}\left[|Z^1(s)|^{-1}_\epsilon Z^1(s) \cdot \mathcal{X}_{m,1}(\tilde{X}(s)) \right]
\end{equation}
where
\begin{equation}
    \mathcal{X}_{m,1}(\tilde{X}(s)) := \frac{1}{p-1}\sum_{j\neq 1, j \in \mathcal{C}_{m,q(1)}} K(\tilde{X}^j(s) - \tilde{X}^1(s)) - \frac{1}{N-1} \sum_{j\neq 1}K(\tilde{X}^j(s) - \tilde{X}^1(s)).
\end{equation}
Clearly, from the definition of $|\cdot|_\epsilon$, we know that $|x|_\epsilon \leq |x| + \epsilon$. Then
\begin{multline}
    -\mathbb{E}\left[|Z^1(s)|^{-1}_\epsilon |Z^1(s)|^2\right] \leq  -\mathbb{E}\left[|Z^1(s)|^{-1}_\epsilon (|Z^1(s)|_\epsilon - \epsilon)^2\right]\\
    = \mathbb{E}\left[-|Z^1(s)|_\epsilon + 2\epsilon - \frac{\epsilon^2}{\sqrt{|Z^1(s)|^2 + \epsilon^2}}\right] \leq -\mathbb{E}|Z^1(s)|_\epsilon + 2\epsilon.
\end{multline}
For the remaining term, we denote it by
\begin{equation}
    \tilde{R}(t) := \mathbb{E}\left[|Z^1(t)|^{-1}_\epsilon Z^1(t) \cdot \mathcal{X}_{m,1}(\tilde{X}(t)) \right].
\end{equation}
Then
\begin{equation}
\begin{aligned}
    \tilde{R}(t) &= \mathbb{E}\left[|Z^1(t_m)|^{-1}_\epsilon Z^1(t_m) \cdot \mathcal{X}_{m,1}(\tilde{X}_{t_m})\right]\\
    &\quad + \mathbb{E}\left[|Z^1(t_m)|^{-1}_\epsilon Z^1(t_m) \cdot (\mathcal{X}_{m,1} (\tilde{X}(t)) - \mathcal{X}_{m,1}(\tilde{X}(t_m)))\right]\\
    &\quad+\mathbb{E}\left[\big(|Z^1(t)|^{-1}_\epsilon Z^1(t) - |Z^1(t_m)|^{-1}_\epsilon Z^1(t_m)\big) \cdot \mathcal{X}_{m,1}(\tilde{X}(t))\right]\\
    &:=J_1 + J_2 + J_3.
\end{aligned}
\end{equation}

Similarly with \eqref{eq:I10}, by consistency of the random batch,
\begin{equation}
    J_1 = 0.
\end{equation}

For the term $J_2$, since $|Z^1(t_m)|^{-1}_\epsilon |Z^1(t_m)| \leq 1$, by Lemma \ref{lmm:mathcalX}, we have
\begin{equation}
    J_2 \leq \mathbb{E} |\mathcal{X}_{m,1} (\tilde{X}(t)) - \mathcal{X}_{m,1}(\tilde{X}(t_m)) | \leq C\kappa.
\end{equation}

For the term $J_3$, we consider the function $f(x) := \frac{x}{|x|_\epsilon}$. Clearly, $\nabla f(x) = |x|_{\epsilon}^{-1}(I_d - \frac{x^{\otimes 2}}{|x|_{\epsilon}^2})$ is always well-defined, and $|\nabla f(x)| \leq 2/\epsilon$. Then, using the boundedness of $K(\cdot)$ (so that $\mathcal{X}_{m,1}(\tilde{X}(t))$ is bounded by $2M_K$), as well as the H\"older's continuity property of $Z^1$ in Lemma \ref{lmm:Zcon}, we have
\begin{equation}
    J_3 \leq \frac{2}{\epsilon}\mathbb{E}|Z^1(t) - Z^1(t_m)| \cdot 2M_K \leq C\frac{\kappa}{\epsilon}.
\end{equation}

Combining all the above, we conclude that
\begin{equation}
    \mathbb{E}|Z^1(t)|_\epsilon \leq \epsilon + \int_0^t (-C\mathbb{E}|Z^1(s)|_\epsilon + C(\epsilon + \kappa + \kappa \epsilon^{-1})) ds.
\end{equation}
Finally, choosing $\epsilon = \kappa^{\frac{1}{2}}$ for $\eta < 1$, by Gr\"onwall's inequality, and since $|x| < |x|_\epsilon$, we have 
\begin{equation}
    \mathbb{E}|Z^1(t)| \leq \mathbb{E}|Z^1(t)|_\epsilon \leq C\kappa^{\frac{1}{2}}.
\end{equation}
Then, by definition of the Wasserstein-1 distance, we obtain the desired uniform-in-time error estimate:
\begin{equation}
   \sup_{t \geq 0} W_1(\rho_t^{(1)}, \tilde{\rho}_t^{(1)}) \leq C\kappa^{\frac{1}{2}}.
\end{equation}
Above, the values of the constant $C$ may vary but are all independent of the time $t$ and the particle number $N$.

\end{proof}

A significant corollary of our result is the following $\alpha$-stable case.
\begin{assumption}\label{ass0}
The L\'evy measure $\nu$ satisfies
\begin{equation}
    \nu(dz) = C_{d,\alpha} |z|^{-(d+\alpha)}dz,\quad C_{d,\alpha} := \frac{2^{\alpha-1} \alpha \Gamma((d+\alpha) / 2)}{\pi^{2 / d} \Gamma(1-\alpha / 2)},\quad \alpha \in (1,2).
\end{equation}
\end{assumption}

\begin{corollary}[rotational invariant $\alpha$-stable case]
Let $\rho_t^{(1)}$, $\tilde{\rho}_t^{(1)}$ the first marginal distributions of \eqref{eq:ips}, \eqref{eq:rbm}, respectively. 
 Under Assumptions \ref{ass2}, \ref{ass:init1}, \ref{ass0},  for small time step $\kappa$, there exists a positive constant $C$ such that
\begin{equation}
   \sup_{t\geq 0} W_1(\rho_t^{(1)}, \tilde{\rho}_t^{(1)}) \leq C\kappa^{\frac{1}{2}}.
\end{equation}
\end{corollary}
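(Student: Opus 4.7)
The plan is to deduce the corollary as a direct specialization of Theorem \ref{thm:nosecondmoment}. Assumptions \ref{ass2} and \ref{ass:init1} concern only the potential $V$, the kernel $K$, and the initial data, and are hypothesized in the corollary's statement. Therefore the only thing I need to verify is that the rotational invariant $\alpha$-stable measure in Assumption \ref{ass0} satisfies Assumption \ref{ass00}, i.e. that it has a finite first moment away from the origin. Once that is checked, the uniform-in-time Wasserstein-1 bound $\sup_{t \ge 0} W_1(\rho_t^{(1)}, \tilde{\rho}_t^{(1)}) \le C\kappa^{1/2}$ is literally the conclusion of Theorem \ref{thm:nosecondmoment} applied to this choice of $\nu$.

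To verify the integrability condition, I would pass to spherical coordinates, writing $z = r\omega$ with $r = |z| \ge 0$ and $\omega \in S^{d-1}$. Since $\nu(dz) = C_{d,\alpha}|z|^{-(d+\alpha)} dz$ is rotationally invariant, one obtains
\begin{equation*}
\int_{|z|\ge 1} |z|\, \nu(dz) = C_{d,\alpha}\, |S^{d-1}| \int_1^\infty r \cdot r^{-(d+\alpha)} \cdot r^{d-1}\, dr = C_{d,\alpha}\, |S^{d-1}| \int_1^\infty r^{-\alpha}\, dr,
\end{equation*}
and this last integral is finite precisely because $\alpha \in (1,2)$ gives $\alpha > 1$. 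Thus Assumption \ref{ass00} holds. As a sanity check one can also note that the complementary small-jump condition $\int_{|z|<1} |z|^2\, \nu(dz) < \infty$, which is needed for $\nu$ to be a legitimate L\'evy measure, reduces to $\int_0^1 r^{1-\alpha}\, dr < \infty$ and is guaranteed by $\alpha < 2$; this is what makes the regime $\alpha \in (1,2)$ the natural one for our result.

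With Assumptions \ref{ass2}, \ref{ass:init1}, \ref{ass00} now all in force, Theorem \ref{thm:nosecondmoment} applies verbatim and yields the claimed bound. There is essentially no real obstacle in this argument: the only step that requires any thought is confirming the exponent condition in the spherical-coordinate computation, and the entire point of having formulated Theorem \ref{thm:nosecondmoment} under the weaker first-moment hypothesis (rather than under the finite second-moment Assumption \ref{ass1} used in Theorem \ref{thm:secondmoment}) was precisely to cover this $\alpha$-stable regime, where the second moment is infinite but the first moment is finite.
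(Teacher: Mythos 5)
Your proposal is correct and matches the paper's intended argument: the paper presents this corollary as an immediate consequence of Theorem \ref{thm:nosecondmoment}, with the only substantive step being the verification (left implicit in the paper) that the rotational invariant $\alpha$-stable measure with $\alpha \in (1,2)$ satisfies the finite-first-moment condition of Assumption \ref{ass00}, which your spherical-coordinate computation establishes correctly.
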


Furthermore, a very natural question is: why we consider different measures (Wasserstein-2 distance in Theorem \ref{thm:secondmoment}) and Wasserstein-1 distance for Theorem \ref{thm:nosecondmoment})? To our knowledge, the choice of order of the Wasserstein distances is strongly related with the value of 
\begin{equation}
    \bar{p} := \sup\{p>0 : \mathbb{E}|L(t)|^p <\infty \},
\end{equation}
which is determined by the integrability of the L\'evy measure $\nu$. In other words, the convergence topology is determined by the intensity of large jumps. Moreover, after exactly the same derivation, we can see that if we replace 
\begin{equation}
    \int_{|z|\geq 1} |z|\nu(dz) < \infty
\end{equation}
by 
\begin{equation}
    \int_{|z|\geq 1} |z|^\alpha\nu(dz) < \infty
\end{equation}
for some $\alpha \in (1,2)$, then the result in Theorem \ref{thm:nosecondmoment} can be correspondingly replaced by 
\begin{equation}
   \sup_{t\geq 0} W_\alpha(\rho_t^{(1)}, \tilde{\rho}_t^{(1)}) \leq C\kappa^{\frac{1}{2}}.
\end{equation}
In this paper, we consider the $W_1$ distance just for simplicity, and we separately state the finite and infinite second moment cases because of the difference in the proof. We summarize the $W_\alpha$ ($\alpha \in (1,2)$) results discussed above into the following corollary:
\begin{assumption}[finite $p$-th moment]\label{asspp}
Fix $\alpha \in (1,2)$. The L\'evy measure $\nu$ satisfies
\begin{equation}
    \int_{|z|\geq 1} |z|^\alpha \nu(dz) < \infty.
\end{equation}
\end{assumption}

\begin{corollary}\label{thm:coroWp}
Let $\rho_t^{(1)}$, $\tilde{\rho}_t^{(1)}$ the first marginal distributions of \eqref{eq:ips}, \eqref{eq:rbm}, respectively. 
 Under Assumptions \ref{ass2}, \ref{ass:init1}, \ref{asspp},  for small time step $\kappa$, there exists a positive constant $C$ such that
\begin{equation}
   \sup_{t\geq 0} W_\alpha(\rho_t^{(1)}, \tilde{\rho}_t^{(1)}) \leq C\kappa^{\frac{1}{2}}.
\end{equation}
\end{corollary}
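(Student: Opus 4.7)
The plan is to run the argument of Theorem \ref{thm:nosecondmoment} verbatim with the regularized norm replaced by its $\alpha$-power $|x|_\epsilon^\alpha := (|x|^2 + \epsilon^2)^{\alpha/2}$. As before, I will use the synchronous coupling so that the common L\'evy drivers cancel in $Z^i := \tilde X^i - X^i$, making each $Z^i$ absolutely continuous and allowing the classical chain rule for $|Z^i|_\epsilon^\alpha$. The motivation for the regularization is identical to the infinite second-moment case: the map $x \mapsto |x|^\alpha$ with $\alpha \in (1,2)$ has unbounded Hessian at the origin, while $\nabla(|x|_\epsilon^\alpha) = \alpha|x|_\epsilon^{\alpha-2}x$ is everywhere Lipschitz with constant $O(\epsilon^{\alpha-2})$.

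After applying the chain rule and taking expectation, the strong convexity of $V$ together with the Lipschitz bound on $K$ produces the dissipative contribution $-\alpha(\lambda_V - 2L_K)\,\mathbb{E}[|Z^1|_\epsilon^{\alpha-2}|Z^1|^2]$, which I will rewrite via the algebraic identity $|x|_\epsilon^{\alpha-2}|x|^2 = |x|_\epsilon^\alpha - \epsilon^2|x|_\epsilon^{\alpha-2}$ together with $|x|_\epsilon \geq \epsilon$ to obtain a clean $-C\,\mathbb{E}|Z^1|_\epsilon^\alpha + C\epsilon^\alpha$ on the right-hand side. The random-batch contribution $\alpha\mathbb{E}[|Z^1|_\epsilon^{\alpha-2}Z^1\cdot\mathcal{X}_{m,1}(\tilde X(t))]$ splits into the usual $J_1 + J_2 + J_3$ by freezing at $t_m$. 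The identity $J_1 = 0$ is free from consistency of the random batch. For $J_3$, the uniform Lipschitz bound $|\nabla(|x|_\epsilon^{\alpha-2}x)| \leq C\epsilon^{\alpha-2}$ combined with $|\mathcal{X}_{m,1}| \leq 2M_K$ and the path regularity of Lemma \ref{lmm:Zcon} yields $J_3 \leq C\kappa\epsilon^{\alpha-2}$, exactly parallel to the $C\kappa/\epsilon$ bound in Theorem \ref{thm:nosecondmoment}.

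The delicate step is $J_2$, since the prefactor $|Z^1(t_m)|_\epsilon^{\alpha-2}|Z^1(t_m)| \leq |Z^1(t_m)|_\epsilon^{\alpha-1}$ is no longer uniformly bounded (in contrast to the $\leq 1$ bound available in the Wasserstein-1 proof). I will handle this by combining the $\kappa$-bound from Lemma \ref{lmm:mathcalX} with H\"older's inequality at exponents $(\alpha/(\alpha-1), \alpha)$, followed by a weighted Young's inequality, to obtain $J_2 \leq \eta\,\mathbb{E}|Z^1|_\epsilon^\alpha + C_\eta \kappa$ for any $\eta > 0$; the $\eta$-term is then absorbed into the dissipative piece. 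This is the main bookkeeping obstacle I anticipate.

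Assembling all contributions gives a Gr\"onwall-ready inequality
\begin{equation*}
\mathbb{E}|Z^1(t)|_\epsilon^\alpha \leq \epsilon^\alpha + \int_0^t \Bigl(-C\,\mathbb{E}|Z^1(s)|_\epsilon^\alpha + C\bigl(\epsilon^\alpha + \kappa + \kappa\epsilon^{\alpha-2}\bigr)\Bigr)\,ds.
\end{equation*}
Choosing $\epsilon = \kappa^{1/2}$ balances $\epsilon^\alpha \sim \kappa\epsilon^{\alpha-2} \sim \kappa^{\alpha/2}$ and dominates $\kappa$ since $\alpha/2 < 1$. Gr\"onwall's inequality and $|x| \leq |x|_\epsilon$ then give $\sup_{t\geq 0}\mathbb{E}|Z^1(t)|^\alpha \leq C\kappa^{\alpha/2}$, and the desired bound $\sup_{t\geq 0}W_\alpha(\rho_t^{(1)},\tilde\rho_t^{(1)}) \leq C\kappa^{1/2}$ follows from the definition of the Wasserstein-$\alpha$ distance. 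A routine secondary check will be to confirm that Lemmas \ref{lmm:Zcon} and \ref{lmm:mathcalX} remain valid under the weaker moment condition of Assumption \ref{asspp}; this should go through essentially unchanged since their proofs depend only on boundedness of $K$ and integrability of the jumps up to order $\alpha$.
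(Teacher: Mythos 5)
Your proposal is correct and matches the paper's intended argument: the paper proves this corollary only by asserting that the derivation of Theorem \ref{thm:nosecondmoment} carries over verbatim once $|\cdot|_\epsilon$ is replaced by $|\cdot|_\epsilon^\alpha$ and the first-moment condition is upgraded to Assumption \ref{asspp}, which is exactly the computation you carry out. Your handling of $J_2$ (interpolating the first-moment bound of Lemma \ref{lmm:mathcalX} with the uniform bound $|\mathcal{X}_{m,1}|\le 2M_K$ via H\"older/Young) and the balance $\epsilon=\kappa^{1/2}$, giving $\epsilon^\alpha\sim\kappa\epsilon^{\alpha-2}\sim\kappa^{\alpha/2}$, are precisely the ingredients needed to make that one-line claim rigorous.
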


\section{Lemmas used in the proof}\label{sec:lemma}
In this section, we provide some technical lemmas used in the proof of Theorem \ref{thm:secondmoment}, \ref{thm:nosecondmoment}. The first lemma is the uniform-in-time moment bound for both the interacting particle system and the associated RBM dynamics. We consider the finite and infinite second moment cases separately.

\begin{lemma}[uniform-in-time moment estimate]\label{lmm:2ndmoment}
Suppose Assumptions \ref{ass2}, \ref{ass:init1} hold.
\begin{enumerate}
    \item Suppose Assumption \ref{ass1}  holds. Then there exists positive constant $C$ independent of $i$ such that
\begin{equation}
    \sup_{t \geq 0} \mathbb{E}|X^i(t)|^2 \leq C < \infty,\quad \sup_{t \geq 0} \mathbb{E}|\tilde{X}^i(t)|^2 \leq C < \infty,\quad 1\leq i\leq N.
\end{equation}
    \item Suppose Assumption \ref{ass0} holds. Then there exists positive constant $C$ independent of $i$ such that
\begin{equation}
    \sup_{t \geq 0} \mathbb{E}|X^i(t)| \leq C < \infty,\quad \sup_{t \geq 0} \mathbb{E}|\tilde{X}^i(t)| \leq C < \infty,\quad 1\leq i\leq N.
\end{equation}
\end{enumerate}

\end{lemma}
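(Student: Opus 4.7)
The strategy in both cases is standard: apply the It\^o formula for jump SDEs to a suitable convex test function $f$, take expectations to kill the compensated-Poisson martingale, and reduce everything to a Gr\"onwall-type ODE inequality for $\mathbb{E} f(X^i(t))$ (and likewise for $\tilde X^i$). The two parts differ only in the choice of $f$, which is forced by the integrability available for the L\'evy measure.

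For Part~1, I would take $f(x) = |x|^2$. A direct expansion gives
\begin{equation*}
    f(x+z) - f(x) - \nabla f(x)\cdot z\,\mathbf{1}_{|z|<1} \;=\; |z|^2 + 2\,x\cdot z\,\mathbf{1}_{|z|\geq 1},
\end{equation*}
so the jump-generator term, after integration against $\nu$, is bounded by $C(1 + \mathbb{E}|X^i(t)|)$, using Assumption~\ref{ass1} together with the standing condition $\int 1\wedge|z|^2\,\nu(dz)<\infty$ (and $|z|\leq |z|^2$ on $\{|z|\geq 1\}$). Strong convexity of $V$ gives $-2\mathbb{E}[X^i\cdot\nabla V(X^i)]\leq -2\lambda_V\mathbb{E}|X^i|^2 + C\,\mathbb{E}|X^i|$, while $\|K\|_\infty\leq M_K$ bounds the interaction term by $2M_K\,\mathbb{E}|X^i|$. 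Young's inequality collapses everything to
\begin{equation*}
    \tfrac{d}{dt}\mathbb{E}|X^i(t)|^2 \;\leq\; -\lambda_V\,\mathbb{E}|X^i(t)|^2 + C,
\end{equation*}
and Gr\"onwall delivers the uniform-in-time bound. The RBM dynamics \eqref{eq:rbm} admits an identical argument on each interval $[t_m,t_{m+1})$, since the batch-averaged force is still bounded by $M_K$ (the $1/(p-1)$ factor compensates the smaller sum), and the initial data at each $t_m$ is controlled by the bound obtained on the previous interval.

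For Part~2, $|x|^2$ is no longer integrable against $\nu$, so I would use the regularized surrogate $|x|_\epsilon := \sqrt{|x|^2 + \epsilon^2}$ (the very device the paper introduces for Theorem~\ref{thm:nosecondmoment}) with $\epsilon$ fixed at a positive constant, say $\epsilon=1$. The Hessian $\nabla^2 |x|_\epsilon = |x|_\epsilon^{-1}\bigl(I_d - x^{\otimes 2}/|x|_\epsilon^2\bigr)$ is uniformly bounded by $1/\epsilon$, and $|\cdot|_\epsilon$ is $1$-Lipschitz. Hence the small-jump generator is bounded by $\tfrac{1}{2\epsilon}\int_{|z|<1}|z|^2\,\nu(dz)$, and the large-jump contribution is bounded by $\int_{|z|\geq 1}|z|\,\nu(dz)$; both are finite (the latter because an $\alpha$-stable tail $|z|^{-(d+\alpha)}$ with $\alpha>1$ is integrable against $|z|$ outside the unit ball). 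Strong convexity combined with the elementary inequality $|x|^2/|x|_\epsilon \geq |x|_\epsilon - \epsilon$ gives a dissipation term $-\lambda_V\,\mathbb{E}|X^i|_\epsilon + C$, the interaction contributes $M_K$, and Gr\"onwall again closes the estimate. Since $|x|\leq |x|_\epsilon$, the bound transfers to $\mathbb{E}|X^i|$, and the same argument handles $\tilde X^i$.

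The main obstacle is the non-smoothness of $|\cdot|$ at the origin in Part~2, which is precisely the singularity that Theorem~\ref{thm:nosecondmoment} circumvents via the $|\cdot|_\epsilon$ trick; once one commits to this surrogate with a \emph{fixed} $\epsilon$ (no need to send $\epsilon\to 0$ here) the estimate is routine. A secondary technical point is justifying that the compensated-Poisson stochastic integrals have vanishing expectation, for which I would localize by the hitting times $\tau_R = \inf\{t : |X^i(t)|\geq R\}$, run the ODE inequality up to $t\wedge\tau_R$, and then pass $R\to\infty$ by monotone convergence using the $R$-independent bound just obtained.
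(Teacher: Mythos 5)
Your proposal follows essentially the same route as the paper: It\^o's formula applied to $|x|^2$ (Part 1) and to the regularized norm $|x|_\epsilon$ with a fixed $\epsilon$ (Part 2), vanishing of the compensated-Poisson martingale, and a Gr\"onwall closure, with the large-jump and small-jump generator terms controlled exactly as in the paper via Assumption \ref{ass1} (resp.\ $\int_{|z|\ge 1}|z|\,\nu(dz)<\infty$) and $\int_{|z|<1}|z|^2\,\nu(dz)<\infty$. The only notable (and harmless, arguably cleaner) deviations are that you bound the interaction term by $\|K\|_\infty\le M_K$ where the paper uses the Lipschitz constant $L_K$ to obtain the dissipation rate $\lambda_V-2L_K$, and that you make the localization by stopping times explicit where the paper simply asserts the martingale property.
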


\begin{proof}
\begin{enumerate}[wide]
\item \textbf{L\'evy jump with finite second-order moments.} We first consider the process $X^i(t)$. By It\^o's formula, we have 
\begin{equation}
|X^i(t)|^2 = |X^i(0)|^2 + \int_0^t 2X^i(s-) \cdot dX^i(s) + \langle (X^i)^c\rangle_t + \sum_{s\leq t}\left(|X^i(s) - X^i(s-) - 2 X^{i}(s-) \Delta X^i(s)\right).
\end{equation}
Here, $\Delta X^i(s) := X^i(s) - X^i(s-)$, and $(X^i)^c$ denotes the continuous part of the semimartingale $X^i$. Recall that
\begin{equation}
    X^{i}(t) = X^{i}(0) - \int_0^t \nabla V(X^i(s)) ds + \int_0^t \frac{1}{N-1} \sum_{j\neq i}K(X^j(s) - X^i(s)) ds +  L^{i}(t),\quad 1 \leq i\leq N.
\end{equation}
Clearly, the continuous part of the semi-martingale $X^i$ has zero quadratic variation ($ \langle (X^i)^c\rangle_t = 0$), since there is no Brownian motion. Moreover, recall that
\begin{equation}
    L^i(t) =  \int_0^t \int_{|z|<1} z\tilde{N}^i(dx,dz) + \int_0^t \int_{|z| \geq 1} z N^i(ds, dz).
\end{equation}
Also note that $\int_0^t X^i(s-) d(X^i)^c_s = \int_0^t X^i(s) d(X^i)^c_s$, we have
\begin{equation}
\begin{aligned}
|X^i(t)|^2 &= |X^i(0)|^2 + \int_0^t 2X^i(s) \cdot \left(-\nabla V(X^i(s)) + \frac{1}{N-1}\sum_{j\neq i} K(X^j(s) - X^i(s)) \right) ds \\
&\quad + \int_0^t \int_{\mathbb{R}^d} (|X^i(s-) + z|^2 - |X^i(s-)|^2)\tilde{N}^i(dz,ds)\\
&\quad +  \int_0^t \int_{|z|\geq 1}(|X^i(s-) + z|^2 - |X^i(s-)|^2) \nu(dz) ds\\
&\quad +  \int_0^t \int_{|z|<1}(|X^i(s-) + z|^2 - |X^i(s-)|^2 - 2z\cdot X^i(s-))\nu(dz)ds.
\end{aligned}
\end{equation}
By Assumption \ref{ass2}, $V$ is $\lambda_V$-convex, and $K$ is $L_K$-Lipschitz. Then since $X^1,\dots, X^N$ are exchangeable, we have
\begin{multline}
    \mathbb{E}\int_0^t 2X^i(s) \cdot \left(-\nabla V(X^i(s)) + \frac{1}{N-1}\sum_{j\neq i} K(X^j(s) - X^i(s)) \right) ds\\
    \leq -2(\lambda_v -2 L_K)\int_0^t \mathbb{E}|X^i(s)|^2 ds.
\end{multline}
The term $\int_0^t \int_{\mathbb{R}^d} (|X^i(t-) + z|^2 - |X^i(t-)|^2)\tilde{N}^i(dz,dt)$ is a martingale, so it has zero mean. For the other terms involving the L\'evy meausre $\nu$, by Young's inequality, we have
\begin{equation}
\begin{aligned}
    &\quad \mathbb{E}\int_0^t \int_{|z|\geq 1}(|X^i(s-) + z|^2 - |X^i(t-)|^2) \nu(dz) ds\\
    &=  \int_0^t \int_{|z|\geq 1}|z|^2 \nu(dz) ds + 2\mathbb{E} \int_0^t \int_{|z|\geq 1}X^i(s-) \cdot z \nu(dz) ds\\
    &\leq  (1 + \frac{1}{2\delta}) (\int_{|z|\geq 1}|z|^2 \nu(dz)) t + 2\delta \nu(\{|z| \geq 1 \}) \int_0^t \mathbb{E}|X^i(s)|^2ds,
\end{aligned}
\end{equation}
where $\delta > 0$ is a constant to be determined. For the other term, we have
\begin{equation}
     \mathbb{E} \int_0^t \int_{|z|<1}(|X^i(s-) + z|^2 - |X^i(s-)|^2 - 2z\cdot X^i(s-))\nu(dz)ds =  (\int_{|z|<1}|z|^2\nu(dz))t.
\end{equation}
Finally, since $\nu$ is a Levy measure, $\int_{|z|<1}|z|^2\nu(dz) < \infty$. By Assumption \ref{ass1}, $\int_{|z|\geq 1}|z|^2 \nu(dz) < \infty$. Hence, concluding all the above, choosing small $\delta$ such that  $\delta \nu(\{|z| \geq 1 \})  <\lambda_v - 2L_K$, we have
\begin{equation}
    \mathbb{E}|X^i(t)|^2 \leq \mathbb{E}|X^i(0)|^2 - C_1\int_0^t \mathbb{E}|X^i(s)|^2 ds + C_2t.
\end{equation}
By Gr\"onwall's inequality, and since $\sup_i\mathbb{E}|X^i(0)|^2 < \infty$, we conclude that
\begin{equation}
    \sup_{t \geq 0} \mathbb{E}|X^i(t)|^2 \leq C < \infty.
\end{equation}

For $\tilde{X}$, using exactly the same argument, we can obtain
\begin{equation*}
    \mathbb{E}[|\tilde{X}^i(t)|^2|\mathcal{F}_{t_m}] \leq \mathbb{E}|\tilde{X}^i(0)|^2 - C_1\int_0^t \mathbb{E}[|\tilde{X}^i(s)|^2|\mathcal{F}_{t_m}]ds + C_2t,\quad t \in[t_m,t_{m+1}).
\end{equation*}\
Consequently,
\begin{equation}
    \sup_{t \geq 0} \mathbb{E}|\tilde{X}^i(t)|^2 \leq C < \infty.
\end{equation}

\item \textbf{L\'evy jump with infinite second-order moments.}

We first consider the process $X$. 
Recall that $|x|_\epsilon = \sqrt{|x|^2 + \epsilon^2}$ for $\epsilon > 0$. By It\^o's formula, we have
\begin{equation}\label{eq:afterito}
\begin{aligned}
|X^i(t)|_\epsilon &= |X^i(0)|_\epsilon + \int_0^t |X^i(s)|_{\epsilon}^{-1} X^i(s) \cdot (-\nabla V(X^i(s) + \frac{1}{N-1}\sum_{j \neq i}K(X^j(s) - X^i(s))) ds\\
&\quad + \int_0^t \int_{\mathbb{R}^d} (|X^i(s-) + z|_\epsilon - |X^i(s-)|_\epsilon)\tilde{N}^i(dz,ds)\\
&\quad +  \int_0^t \int_{|z|\geq 1}(|X^i(s-) + z|_\epsilon - |X^i(s-)|_\epsilon) \nu(dz) ds\\
&\quad +  \int_0^t \int_{|z|<1}(|X^i(s-) + z|_\epsilon - |X^i(s-)|_\epsilon - |X^i(s-)|_\epsilon X^i(s-) \cdot z)\nu(dz)ds. 
\end{aligned}
\end{equation}
Note that the second line above is a martingale which has zero expectation. For the first line, by Assumption \ref{ass2}, $V$ is $\lambda_V$-convex, and $K$ is $L_K$-Lipschitz. Then since $X^1,\dots, X^N$ are exchangeable, we have
\begin{multline}
    \mathbb{E}\int_0^t |X^i(s)|_{\epsilon}^{-1} X^i(s) \cdot \left(-\nabla V(X^i(s)) + \frac{1}{N-1}\sum_{j\neq i} K(X^j(s) - X^i(s)) \right) ds\\
    \leq \int_0^t (-\lambda_V\mathbb{E}\left[|X^i(s)|^{-1}_\epsilon|X^i(s)|^2\right] + M_K) ds \leq \int_0^t (-\lambda_V\mathbb{E}|X^i(s)|_\epsilon + 2\lambda_{V} \epsilon + M_K) ds,
\end{multline}
where in the last inequality we have used the equality $|x|_\epsilon \leq |x| + \epsilon$. For the third line in \eqref{eq:afterito}, since $|\nabla_x |x|_\epsilon| = |x|_{\epsilon}^{-1} |x| < 1$, we have
\begin{equation}
\mathbb{E}\int_0^t \int_{|z|\geq 1}(|X^i(s-) + z|_\epsilon - |X^i(s-)|_\epsilon) \nu(dz) ds \leq \int_0^t \int_{|z| \geq 1} |z| \nu(dz) ds.
\end{equation}
For the last line in \eqref{eq:afterito}, since $|\nabla^2_{xx} |x|_\epsilon| = |x|^{-1}_{\epsilon} |I_d - \frac{x^{\otimes 2}}{|x|^2_\epsilon}| \leq \frac{2}{\epsilon}$, we have
\begin{equation}\label{eq:afterafterito}
\mathbb{E} \int_0^t \int_{|z|<1}(|X^i(s-) + z|_\epsilon - |X^i(s-)|_\epsilon - |X^i(s-)|_\epsilon X^i(s-) \cdot z)\nu(dz)ds \leq \frac{2}{\epsilon} \int_0^t \int_{|z|<1}|z|^2\nu(dz)ds.
\end{equation}
Since $\nu$ is a L\'evy measure, we know that $\int_{|z|<1}|z|^2 \nu(dz) < \infty$. Also, by Assumption \ref{ass00}, $\int_{|z| \geq 1} |z| \nu(dz) < \infty$. Therefore, by \eqref{eq:afterito} - \eqref{eq:afterafterito}, there exists positive constant $C$ such that
\begin{equation}
\mathbb{E}|X^i(t)|_\epsilon \leq \mathbb{E}|X^i(0)|_\epsilon + \int_0^t (-\lambda_V \mathbb{E}|X^i(s)|_\epsilon + C(1 + \epsilon + \epsilon^{-1})) ds.
\end{equation}
By Gr\"onwall's inequality, there exists a positive constant $C'$ independent of $t$ such that
\begin{equation}
\mathbb{E}|X^i(t)|_\epsilon \leq C'(1 + \epsilon + \epsilon^{-1}).
\end{equation}
Finally, choosing $\epsilon = 1$, and since $|x| < |x|_\epsilon$, we have
\begin{equation}
    \sup_{t \geq 0} \mathbb{E}|X^i(t)| < \infty.
\end{equation}

For $\tilde{X}$, using exactly the same argument, for $\epsilon > 0$, we can obtain for $t \in[t_m,t_{m+1})$,
\begin{equation}
    \mathbb{E}[|\tilde{X}^i(t)|_\epsilon|\mathcal{F}_{t_m}] \leq \mathbb{E}|\tilde{X}^i(0)|^2 + \int_0^t \Big(-\lambda_V\mathbb{E}[|\tilde{X}^i(s)|_\epsilon|\mathcal{F}_{t_m}] + C(1 + \epsilon + \epsilon^{-1})\Big)ds.
\end{equation}\
Consequently, choosing $\epsilon = 1$,
\begin{equation}
    \sup_{t \geq 0} \mathbb{E}|\tilde{X}^i(t)|  < \infty.
\end{equation}

\end{enumerate}

\end{proof}

The following Lemma states the H\"older's continuity for the process $Z^1$ (recall $Z_i = \tilde{X}^i - X^i$). The derivation does not depend on the properties of the noise, so the proof is exactly the same with the Brownian motion case in \cite{jin2020random, cai2024convergence}.

\begin{lemma}[H\"older's continuity for the process $Z^1$]\label{lmm:Zcon}
Suppose Assumption \ref{ass2} holds.
\begin{enumerate}
\item  Under Assumptions \ref{ass00}, there exists a positive constant $C$ independent of $t$, $m$, $\kappa$ such that  
\begin{equation}
    \mathbb{E}|Z^1(t) - Z^1(t_m)|  \leq C\kappa,\quad t \in [t_m,t_{m+1}).
\end{equation}
\item Furthermore, under Assumptions \ref{ass1}, there exists a positive constant $C$ independent of $t$, $m$, $\kappa$ such that  
\end{enumerate}
\begin{equation}
    \mathbb{E}|Z^1(t) - Z^1(t_m)|^2  \leq C\kappa^2,\quad t \in [t_m,t_{m+1}).
\end{equation}
    
\end{lemma}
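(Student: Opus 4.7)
The plan is to exploit the synchronous coupling that underlies the definitions \eqref{eq:ips1}--\eqref{eq:rbm1}: the processes $X^i$ and $\tilde X^i$ are driven by the \emph{same} L\'evy process $L^i$, so when we form the difference
\[
Z^1(t) - Z^1(t_m) \;=\; \bigl[\tilde X^1(t) - \tilde X^1(t_m)\bigr] - \bigl[X^1(t) - X^1(t_m)\bigr],
\]
the jump contributions $L^1(t) - L^1(t_m)$ cancel exactly. What remains is pathwise absolutely continuous in $t$:
\[
Z^1(t) - Z^1(t_m) = -\int_{t_m}^{t}\!\bigl[\nabla V(\tilde X^1(s)) - \nabla V(X^1(s))\bigr]\,ds + \int_{t_m}^{t}\! \Delta^1(s)\,ds,
\]
where $\Delta^1(s)$ is the difference between the batch-averaged interaction on $\tilde X$ and the mean-field interaction on $X$. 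Since $\|K\|_\infty \le M_K$, we immediately get $|\Delta^1(s)| \le 4M_K$ pointwise, and the Lipschitz property of $\nabla V$ (Assumption \ref{ass2}) gives $|\nabla V(\tilde X^1(s)) - \nabla V(X^1(s))| \le L_V|Z^1(s)|$. This is precisely the place at which the proof becomes ``noise-insensitive'', explaining why the argument reduces to the Brownian case.

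For part (1), I would apply Minkowski's inequality to the integral representation above to obtain
\[
\mathbb{E}|Z^1(t) - Z^1(t_m)| \;\le\; L_V\!\int_{t_m}^{t}\!\mathbb{E}|Z^1(s)|\,ds \;+\; 4M_K\,\kappa.
\]
By the triangle inequality, $\mathbb{E}|Z^1(s)| \le \mathbb{E}|\tilde X^1(s)| + \mathbb{E}|X^1(s)|$, and the right-hand side is uniformly bounded in $s$ by the first-moment conclusion of Lemma \ref{lmm:2ndmoment} under Assumption \ref{ass00}. Hence the entire bound is $O(\kappa)$, yielding the claim with a constant independent of $t$, $m$, and $\kappa$.

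For part (2), I would apply the Cauchy--Schwarz inequality to the same representation to get
\[
|Z^1(t) - Z^1(t_m)|^2 \;\le\; 2\kappa\!\int_{t_m}^{t}\!\bigl[L_V^2|Z^1(s)|^2 + 16\,M_K^2\bigr]\,ds.
\]
Taking expectations and using the uniform second-moment bound from Lemma \ref{lmm:2ndmoment} under Assumption \ref{ass1} (which controls $\mathbb{E}|Z^1(s)|^2 \le 2(\mathbb{E}|\tilde X^1(s)|^2 + \mathbb{E}|X^1(s)|^2) \le C$), the right-hand side becomes $O(\kappa^2)$, as required.

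There is no substantive obstacle in this argument: the conceptual key is the cancellation of $L^i(t)-L^i(t_m)$ via synchronous coupling, after which everything reduces to integrating a uniformly-in-expectation bounded drift over a window of length $\kappa$. The only ingredient beyond Assumption \ref{ass2} is the uniform-in-time moment control already provided by Lemma \ref{lmm:2ndmoment}, applied in the appropriate finite-first-moment or finite-second-moment form depending on which part is being proved.
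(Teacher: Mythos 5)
Your proposal is correct and follows essentially the same route as the paper's own proof: synchronous coupling cancels the L\'evy increments, the remaining drift difference is bounded using the Lipschitz property of $\nabla V$ together with the boundedness of $K$, and the uniform-in-time first- or second-moment bounds of Lemma \ref{lmm:2ndmoment} (under Assumption \ref{ass00} or \ref{ass1}, respectively) close the estimate over a window of length $\kappa$. The only differences are cosmetic (e.g.\ your constant $4M_K$ versus the paper's $2M_K$), so there is nothing further to add.
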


\begin{proof}
Since $K(\cdot)$ is bounded and $\nabla V(\cdot)$ is Lipschitz by Assumption \ref{ass2}, and using the uniform-in-time (first-order) moment bound obtained in Lemma \ref{lmm:2ndmoment}, we have
\begin{equation}
\begin{aligned}
    &\quad\mathbb{E}\left|Z^1(t) - Z^1(t_m) \right|
    =\mathbb{E}\Big|-\int_{t_m}^{t}  \left(\nabla V(\tilde{X}^1(s))-\nabla V(X^1(s))\right) ds\\
    &\quad+ \int_{t_m}^{t}  \left(\frac{1}{p-1}\sum_{j\in \mathcal{C}_k, j\neq 1} K(\tilde{X}^j(s) - \tilde{X}^1(s)) - \frac{1}{N-1}\sum_{j\neq 1} K(X^j(s) - X^1(s)) \right) ds \Big|\\
    &\leq L_V\int_{t_m}^{t}(\mathbb{E}|\tilde{X}^1(s)| + \mathbb{E}|X^1(s)|)ds + 2M_K (t - t_m) \leq C\kappa,
\end{aligned}
\end{equation}
for some positive $C$ independent of  $m$, $t$, $N$, $p$.

Similarly, using the uniform-in-time (first-order) moment bound obtained in Lemma \ref{lmm:2ndmoment}, as well as the H\"older's inequality, we have
\begin{equation}
\begin{aligned}
    &\quad\mathbb{E}\left|Z^1(t) - Z^1(t_m) \right|^2
    =\mathbb{E}\Big|-\int_{t_m}^{t}  \left(\nabla V(\tilde{X}^1(s))-\nabla V(X^1(s))\right) ds\\
    &\quad+ \int_{t_m}^{t}  \left(\frac{1}{p-1}\sum_{j\in \mathcal{C}_k, j\neq 1} K(\tilde{X}^j(s) - \tilde{X}^1(s)) - \frac{1}{N-1}\sum_{j\neq 1} K(X^j(s) - X^1(s)) \right) ds \Big|^2\\
    &\leq L_V(t-t_m)\int_{t_m}^{t}(\mathbb{E}|\tilde{X}^1(s)| + \mathbb{E}|X^1(s)|)ds + 2M_K (t - t_m)^2 \leq C\kappa^2, 
\end{aligned}
\end{equation}
for some positive $C$ independent of  $m$, $t$, $N$, $p$.

\end{proof}

The following lemma characterize the H\"older's continuity for the operator $\mathcal{X}_{m,1}$ (recall its definition in \eqref{eq:mathcalXdef}). As we can see below, the derivation relies on the fact that the first moment of the L\'evy measure is finite.

\begin{lemma}[H\"older's continuity for $\mathcal{X}_{m,1}$]\label{lmm:mathcalX}
Recall the definition of $\mathcal{X}_{m,1}$ in \eqref{eq:mathcalXdef}:
\begin{equation}
    \mathcal{X}_{m,1}(\tilde{X}(s)) := \frac{1}{p-1}\sum_{j\neq 1, j \in \mathcal{C}_{m,q(1)}} K(\tilde{X}^j(s) - \tilde{X}^1(s)) - \frac{1}{N-1} \sum_{j\neq 1}K(\tilde{X}^j(s) - \tilde{X}^1(s)),
\end{equation}
where $s \in [t_m,t_{m+1})$. Suppose Assumption \ref{ass2} holds. 
\begin{enumerate}
    \item Under Assumptions \ref{ass1}, there exists a positive constant $C$ independent of $t$, $m$, $\kappa$ such that  
\begin{equation}
\mathbb{E}\left[Z^1(t_m) \cdot (\mathcal{X}_{m,1} (\tilde{X}(t)) - \mathcal{X}_{m,1}(\tilde{X}(t_m)))\right] \leq C\kappa \left( \mathbb{E}|Z^1(t_m)|^2\right)^{\frac{1}{2}},\quad t \in [t_m,t_{m+1}),
\end{equation}
\item Under Assumptions \ref{ass00}, there exists a positive constant $C$ independent of $t$, $m$, $\kappa$ such that  
\begin{equation}
\mathbb{E}| \mathcal{X}_{m,1} (\tilde{X}(t)) - \mathcal{X}_{m,1}(\tilde{X}(t_m) | \leq C\kappa.
\end{equation}
\end{enumerate}

\end{lemma}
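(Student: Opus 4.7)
The plan is to handle the two parts differently, since Part~1 admits a conditional-expectation argument (because $Z^1(t_m)$ is $\mathcal{F}_{t_m}$-measurable) while Part~2 does not (due to the absolute value inside the expectation).

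For Part~1 I would first apply the tower property and pull $Z^1(t_m)$ out to get
\[
\mathbb{E}\bigl[Z^1(t_m)\cdot\bigl(\mathcal{X}_{m,1}(\tilde X(t)) - \mathcal{X}_{m,1}(\tilde X(t_m))\bigr)\bigr] = \mathbb{E}\bigl[Z^1(t_m)\cdot \Phi(t)\bigr],
\]
with $\Phi(t) := \mathbb{E}[\mathcal{X}_{m,1}(\tilde X(t)) - \mathcal{X}_{m,1}(\tilde X(t_m))\mid \mathcal{F}_{t_m}]$. The crucial observation is that conditioning on $\mathcal{F}_{t_m}$ kills the compensated-Poisson martingale contributions arising from the L\'evy increments. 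I would implement this via It\^o's formula applied to each $K(\tilde X^j(s) - \tilde X^1(s))$ between $s = t_m$ and $s = t$, preceded by a short mollification $K \rightsquigarrow K_\delta$ to bypass the lack of $C^2$ regularity and followed by $\delta \to 0$ using $\|K - K_\delta\|_\infty \le L_K \delta$. The surviving drift and jump-compensator terms are bounded in $L^2$ by $O(\kappa)$ using Assumption~\ref{ass2}, the uniform moment bound from Lemma~\ref{lmm:2ndmoment}, the generic L\'evy property $\int_{|z|<1}|z|^2 \nu(dz) < \infty$, and Assumption~\ref{ass1} for $\int_{|z|\ge 1}|z|^2 \nu(dz) < \infty$. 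Cauchy--Schwarz then gives $|\mathbb{E}[Z^1(t_m)\cdot\Phi(t)]| \le C\kappa(\mathbb{E}|Z^1(t_m)|^2)^{1/2}$.

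For Part~2 I would apply the $L_K$-Lipschitz property of $K$ directly, bounding $|\mathcal{X}_{m,1}(\tilde X(t)) - \mathcal{X}_{m,1}(\tilde X(t_m))|$ by a linear combination of the increments $|\tilde X^j(t)-\tilde X^j(t_m) - (\tilde X^1(t)-\tilde X^1(t_m))|$ over $j$. Taking expectation and invoking exchangeability reduces the estimate to controlling $\mathbb{E}|\tilde X^i(t)-\tilde X^i(t_m)|$. The drift piece of $\tilde X^i$ is $O(\kappa)$ via boundedness of $K$ and Lipschitz of $\nabla V$ combined with Lemma~\ref{lmm:2ndmoment}, and the large-jump noise contributes $O(\kappa)$ via $\mathbb{E}|\int_{t_m}^t\!\int_{|z|\ge 1} z\, N^i(ds,dz)|\le \kappa\int_{|z|\ge 1}|z|\nu(dz)$ under Assumption~\ref{ass00}. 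The small-jump compensated Poisson martingale is handled via the It\^o isometry (noting $\int_{|z|<1}|z|^2\nu(dz) < \infty$), combined where necessary with a split-and-interpolate between the bounded and Lipschitz estimates of $K$ to recover the desired rate. The main obstacle is precisely this small-jump compensated martingale $\int_{t_m}^t\!\int_{|z|<1} z\,\tilde N^i(ds,dz)$: in Part~1 it vanishes cleanly inside $\mathbb{E}[\cdot\mid \mathcal{F}_{t_m}]$, but in Part~2 the absolute value on the outside prevents this cancellation, and a careful split according to jump size is needed to avoid losing an extra $\sqrt{\kappa}$ factor. A secondary technical hurdle is justifying the It\^o expansion for the merely Lipschitz $K$ via mollification, organized so that only $\int|z|^2\nu(dz)$-type integrals (Part~1) or only $\int_{|z|\ge 1}|z|\nu(dz)$-type integrals (Part~2) survive the $\delta\to 0$ limit, since $\int_{|z|<1}|z|\nu(dz)$ is allowed to diverge for $\alpha$-stable noise with $\alpha\in(1,2)$.
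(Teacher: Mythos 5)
Your overall strategy coincides with the paper's: for Part~1, tower property plus the observation that conditioning on $\mathcal{F}_{t_m}$ annihilates the compensated small-jump martingale, followed by Cauchy--Schwarz; for Part~2, the Lipschitz bound on $K$ reducing everything to increments of $\tilde X$. You have also correctly located the hard point of Part~2 (the small-jump compensated martingale), which the paper's own proof passes over in one line. However, your proposal does not close that gap, and the fix you sketch cannot work as stated. The It\^o isometry gives $\mathbb{E}\bigl|\int_{t_m}^{t}\int_{|z|<1}z\,\tilde N^i(ds,dz)\bigr| \le \bigl(\kappa\int_{|z|<1}|z|^2\nu(dz)\bigr)^{1/2}=O(\sqrt\kappa)$, and no interpolation between the Lipschitz and boundedness estimates of $K$ improves this to $O(\kappa)$: for the rotationally invariant $\alpha$-stable measure with $\alpha\in(1,2)$ (the paper's motivating example), self-similarity gives $\mathbb{E}|L(t)-L(t_m)|\asymp\kappa^{1/\alpha}$, and the tail estimate $P(|w|>u)\asymp\kappa u^{-\alpha}$ for the increment $w$ of $\tilde X^j-\tilde X^1$ yields $\mathbb{E}\bigl[\min(L_K|w|,2M_K)\bigr]\asymp\kappa^{1/\alpha}$ as well. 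Since $\kappa^{1/\alpha}\gg\kappa$, the claimed $C\kappa$ bound in Part~2 is not reachable by your route; the honest output of this argument is $C\kappa^{1/\alpha}$ (which, one should note, still suffices for Theorem~\ref{thm:nosecondmoment} because $1/\alpha>1/2$, so the downstream conclusion is unaffected).

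A secondary issue is the mollification in Part~1. After replacing $K$ by $K_\delta$, the small-jump compensator involves $\int_{|z|<1}\bigl|K_\delta(x+z)-K_\delta(x)-\nabla K_\delta(x)\cdot z\bigr|\nu(dz)$, controlled only by $\min\bigl(2L_K|z|,\ \|\nabla^2K_\delta\|_\infty|z|^2\bigr)$ with $\|\nabla^2K_\delta\|_\infty\sim L_K/\delta$; for $\alpha$-stable $\nu$ both branches contribute $O(\delta^{1-\alpha})$, which diverges as $\delta\to0$, so it is not true that only harmless $\int|z|^2\nu(dz)$-integrals survive the limit. The paper avoids It\^o on $K$ altogether: it transfers the Lipschitz constant of $K$ onto the conditional expectations of the particle increments $\delta\tilde X^i(s)=\tilde X^i(s)-\tilde X^i(t_m)$, for which $\mathbb{E}[\delta\tilde X^i(s)\mid\mathcal{F}_{t_m}]$ consists only of the drift and the large-jump compensator $\int_{t_m}^{s}\int_{|z|\ge1}z\,\nu(dz)\,du$, both $O(\kappa)$ in $L^2$ by Lemma~\ref{lmm:2ndmoment} and Assumption~\ref{ass1}. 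That route is the one to follow, though you should be aware that the passage from $\bigl|\mathbb{E}[\delta\tilde K^j(s)\mid\mathcal{F}_{t_m}]\bigr|$ to a bound in terms of $\bigl|\mathbb{E}[\delta\tilde X(s)\mid\mathcal{F}_{t_m}]\bigr|$ is itself the delicate step for a merely Lipschitz $K$, and any rigorous write-up must justify it rather than invoke the pointwise Lipschitz inequality.
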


\begin{proof}
By definition,
\begin{multline}
    \mathbb{E}\left[\mathcal{X}_{m,1} \left( \tilde{X}(s)\right)-\mathcal{X}_{m,1}(\tilde{X}(t_m)) \mid \mathcal{F}_{t_m}\right]\\
    = \frac{1}{p-1}\sum_{j \in \mathcal{C}_{m,q(1)}},j\neq 1\mathbb{E}\left[\delta \tilde{K}^j(s)\mid \mathcal{F}_{t_m}\right] - \frac{1}{N-1}\sum_{j\neq 1}\mathbb{E}\left[\delta \tilde{K}^j(s)\mid \mathcal{F}_{t_m}\right],
\end{multline}
where
\begin{equation}\label{eq:deltaK}
\begin{aligned}
    \delta \tilde{K}^j(s) := K(\tilde{X}^j(s) - \tilde{X}^1(s)) - K(\tilde{X}^j(t_m) - \tilde{X}^1(t_m))
    \leq L_K\left|\delta \tilde{X}^1(s) - \delta \tilde{X}^j(s) \right|,
\end{aligned}
\end{equation}
and
\begin{equation}
    \delta \tilde{X}^j(s) := \tilde{X}^j(s) - \tilde{X}^j(t_m),\quad 1\leq j \leq N.
\end{equation}
It is remaining to estimate $\delta \tilde{X}$. Clearly,
\begin{multline}
    |\mathbb{E}\left[\delta \tilde{X}^i(s) \mid \mathcal{F}_{t_m}\right]| = |\int_{t_m}^s \mathbb{E}\left[-\nabla V(\tilde{X}^i(u))\mid\mathcal{F}_{t_m}\right] du\\
    + \int_{t_m}^s \mathbb{E}\left[\frac{1}{p-1}\sum_{j\neq i, j \in\mathcal{C}_{m,q(i)}} K(\tilde{X}^j(u) - K(\tilde{X}^i(u)))\mid\mathcal{F}_{t_m}\right] du + \int_{t_m}^s \int_{|z|\geq 1}  z\nu(dz) du + 0|
\end{multline}
Using boundedness of $K$, Lipschitz of $\nabla V$, finite first moment, and Lemma \ref{lmm:2ndmoment}, we have 
\begin{equation}
    |\mathbb{E}\left[\delta \tilde{X}^i(s) \mid \mathcal{F}_{t_m}\right]| \leq C(\kappa + |\int_{t_m}^t \mathbb{E}\left[\tilde{X}^i(u) \mid \mathcal{F}_{t_m}\right]|)
\end{equation}
Finally, taking expectation, using tower property of the conditional expectation and  H\"older's inequality, we have 
\begin{equation}
\begin{aligned}
&\quad\mathbb{E}\left[Z^1(t_m) \cdot (\mathcal{X}_{m,1} (\tilde{X}(t)) - \mathcal{X}_{m,1}(\tilde{X}(t_m)))\right]\\
&=\mathbb{E}\left[Z^1(t_m) \cdot \mathbb{E}\left[(\mathcal{X}_{m,1} (\tilde{X}(t)) - \mathcal{X}_{m,1}(\tilde{X}(t_m)))\mid \mathcal{F}_{t_m}\right]\right]\\
&\leq \left(\mathbb{E}|Z^t(t_m)|^2\right)^{\frac{1}{2}} \left( C\kappa^2 + C\mathbb{E}\left| \int_{t_m}^t \left(\frac{1}{N-1} \sum_{j\neq 1} + \frac{1}{p-1}\sum_{j\neq 1,j\in \mathcal{C}_{m,q(i)}}\right)\tilde{X}^j(u) du\right|^2\right)^{\frac{1}{2}}\\
&\leq\left(\mathbb{E}|Z^t(t_m)|^2\right)^{\frac{1}{2}} \left( C\kappa^2 + C\kappa \int_{t_m}^t\mathbb{E}\left|  \left(\frac{1}{N-1} \sum_{j\neq 1} + \frac{1}{p-1}\sum_{j\neq 1,j\in \mathcal{C}_{m,q(i)}}\right)\tilde{X}^j(u) \right|^2du\right)^{\frac{1}{2}}\\
&\leq \left(\mathbb{E}|Z^t(t_m)|^2\right)^{\frac{1}{2}} \left( C\kappa^2 + 2C\kappa \int_{t_m}^t  \left(\frac{1}{N-1} \sum_{j\neq 1} + \frac{1}{p-1}\sum_{j\neq 1,j\in \mathcal{C}_{m,q(i)}}\right)\mathbb{E}\left|\tilde{X}^j(u) \right|^2du\right)^{\frac{1}{2}}.
\end{aligned}
\end{equation}
By Assumption \ref{ass1}, using the (second-order) moment bound in Lemma \ref{lmm:2ndmoment}, we obtain 
\begin{equation}
\mathbb{E}\left[Z^1(t_m) \cdot (\mathcal{X}_{m,1} (\tilde{X}(t)) - \mathcal{X}_{m,1}(\tilde{X}(t_m)))\right] \leq C\kappa \left( \mathbb{E}|Z_{t_m}|^2\right)^{\frac{1}{2}}.
\end{equation}
The derivation of the other claim is similar. By tower property of the conditional expectation, we have
\begin{multline}
\mathbb{E}|\mathcal{X}_{m,1} (\tilde{X}(t)) - \mathcal{X}_{m,1}(\tilde{X}(t_m)| = \mathbb{E}\left[\mathbb{E}|\mathcal{X}_{m,1} (\tilde{X}(t)) - \mathcal{X}_{m,1}(\tilde{X}(t_m)| \mid \mathcal{F}_{t_m}\right]\\
\leq 4L_K \mathbb{E}\left[|\delta \tilde{X}^1(s)|\mid \mathcal{F}_{t_m}\right] \leq C\kappa,
\end{multline}
where we have used Assumption \ref{ass00} and the (first-order) moment bound in Lemma \ref{lmm:2ndmoment} in the last inequality.
\end{proof}

\section{Numerical examples}\label{sec:num}
In this section, we run some numerical examples to evaluate the RBM-L\'evy algorithm and verify our theory in Section \ref{sec:convergence}. The experiments were conducted on a server running Ubuntu 20.04 with a 5.15.0-88-generic kernel, dual Intel Xeon Silver 4216 CPUs (64 cores), 125 GB RAM, and four NVIDIA TITAN Xp GPUs with CUDA 12.2 support.

\subsection{A simple test example}
We first consider a simple artificial example to verify the results obtained in our theory. In detail, in what follows, we test the dependence of the errors on the time step $\kappa$, the particle number $N$, and the time $T$. In detail, we consider the following 1-dimensional interacting particle system with L\'evy jump:
\begin{equation}
    dX^i(t) = -a X^i(t) dt + \frac{1}{N-1}\sum_{j\neq i}\frac{X^j(t) - X^i(t)}{1 + |X^j(t) - X^i(t)|^2} dt + dL^i(t),
\end{equation}
where $a \geq 0$ is a positive constant, and $L(t)$ is a rotational invariant $\alpha$-stable L\'evy process with $\alpha =1.5$. The interaction is clearly smooth, bounded and with bounded derivatives. Moreover, it has a long-range interaction. In our experiments, we take the initial state from the following distribution using the Metropolis-Hastings MCMC algorithm:
\begin{equation}\label{eq:rho0}
    \rho_0(x) = \frac{\sqrt{4-x^2}}{2\pi}\textbf{1}_{\{|x| \leq 2 \}}.
\end{equation}
To evaluate the error numerically, we use
\begin{equation}
    \hat{E}_1(T) := \frac{1}{N}\sum_{i=1}^N |X^i(T) - \tilde{X}^i(T)|,
\end{equation}
% \begin{equation}
%     \hat{E}_2(T) := \sqrt{\frac{1}{N}\sum_{i=1}^N |X^i(T) - \tilde{X}^i(T)|^2},
% \end{equation}
Above, $X^i$ and $\tilde{X}^i$ are coupled with the same L\'evy jump and simulated with forward Euler's scheme with small time step $\tau = 2^{-15}$.

In Figure \ref{fig:rate}, we consider the error at time $T=1$.  We test the error $\hat{E}_1$ for RBM-L\'evy with $\kappa = 2^{-4}, 2^{-5}, 2^{-6}, 2^{-7}$. As we can see from Figure \ref{fig:rate}, for different $N$ and different $a$, the slope of the log-log curve is approximately $\frac{1}{2}$. This means the convergence rate is $O(\sqrt{\kappa})$, and is independent of $N$, which is consistent with our theoretical result.

\begin{figure}[htbp]
    \centering
    \begin{subfigure}{0.45\textwidth}
        \centering
        \includegraphics[width=\textwidth]{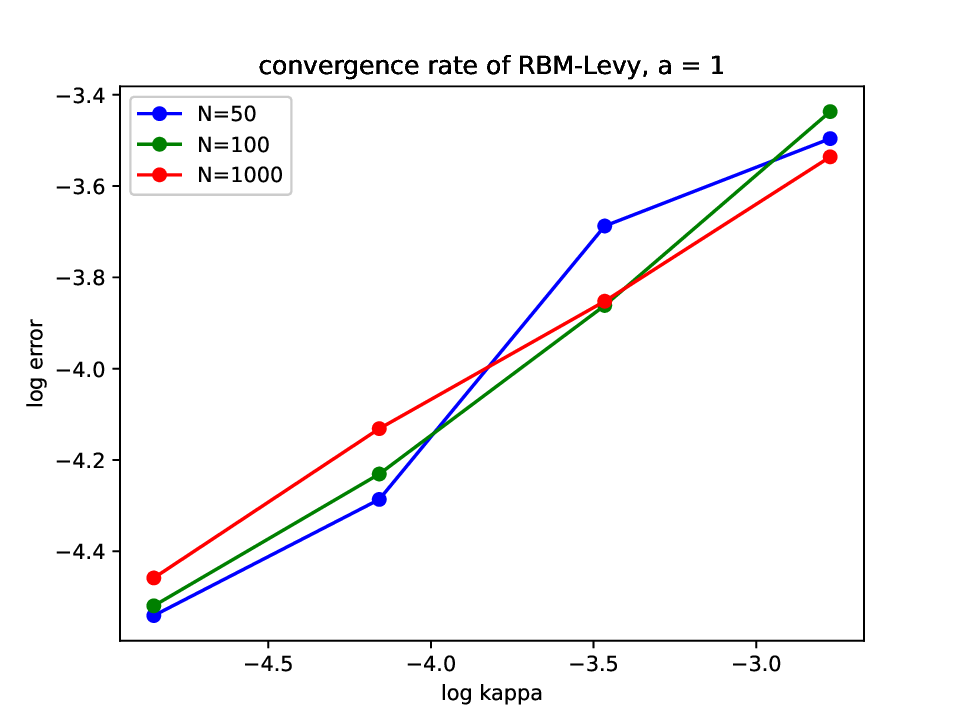}
        \caption{ }
        \label{fig:rbmlevy1d_rate}
    \end{subfigure}
    \hfill
    \begin{subfigure}{0.45\textwidth}
        \centering
        \includegraphics[width=\textwidth]{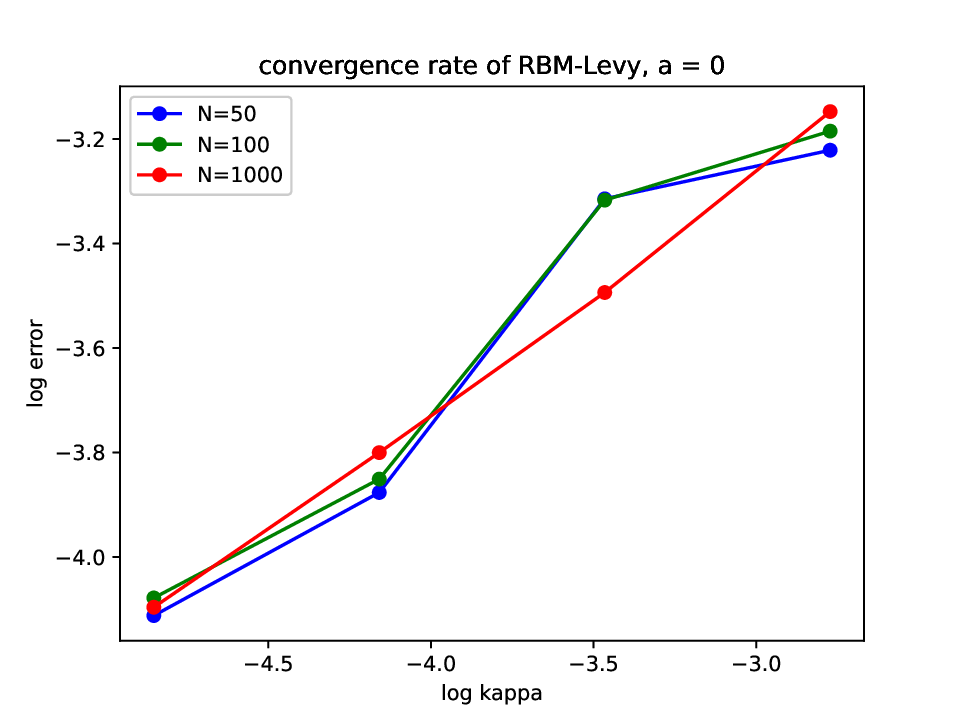}
        \caption{ }
        \label{fig:rvmlevy1d_rate_a0}
    \end{subfigure}
    \caption{Error of RBM-L\'evy versus time step at terminal time $T=1$ (log-log scale): \textbf{(a):} $a = 1$ (with confining condition), \textbf{(b):} $a=0$ (without confining condition). The particle number $N$ is $50, 100, 1000$ in each figure. }
    \label{fig:rate}
\end{figure}

In Figure \ref{fig:rbmlevy1d_time}, we test the long-time behavior of the RBM-L\'evy algorithm. We choose $\kappa = 2^{-7}$, $N = 100$, and $T = 1,2,4,8,16$. As we can see from Figure \ref{fig:rbmlevy1d_time}, when $a=1$ (namely, there is a strongly convex external potential), the convergence of RBM-L\'evy is uniform-in-time; when there is no such confining condition ($a=0$), the error of RBM would increase in time.

In figure \ref{fig:rvmlevy1d_cost}, we test the computational complexity of the RBM-L\'evy algorithm. We choose $a=1$, $\kappa = 2^{-7}$, $T=1$, and $N = 50,100,200,500,1000$. As we can see from the results, the computational cost of RBM-L\'evy is much cheaper.

\begin{figure}[htbp]
    \centering
    \begin{subfigure}{0.45\textwidth}
        \centering
        \includegraphics[width=\textwidth]{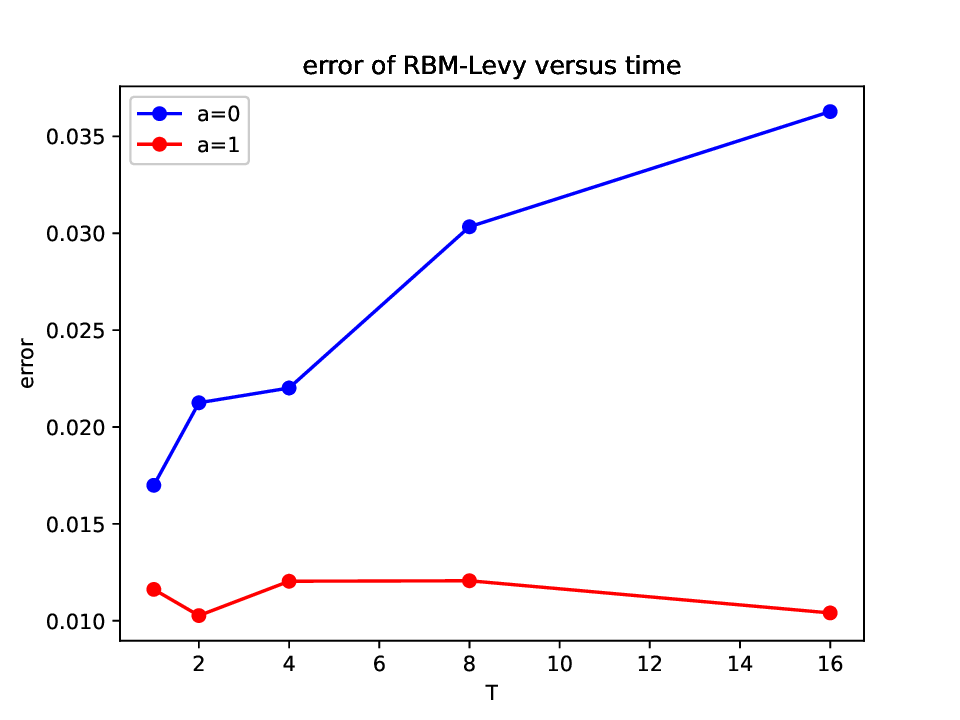}
        \caption{ }
        \label{fig:rbmlevy1d_time}
    \end{subfigure}
    \hfill
    \begin{subfigure}{0.45\textwidth}
        \centering
        \includegraphics[width=\textwidth]{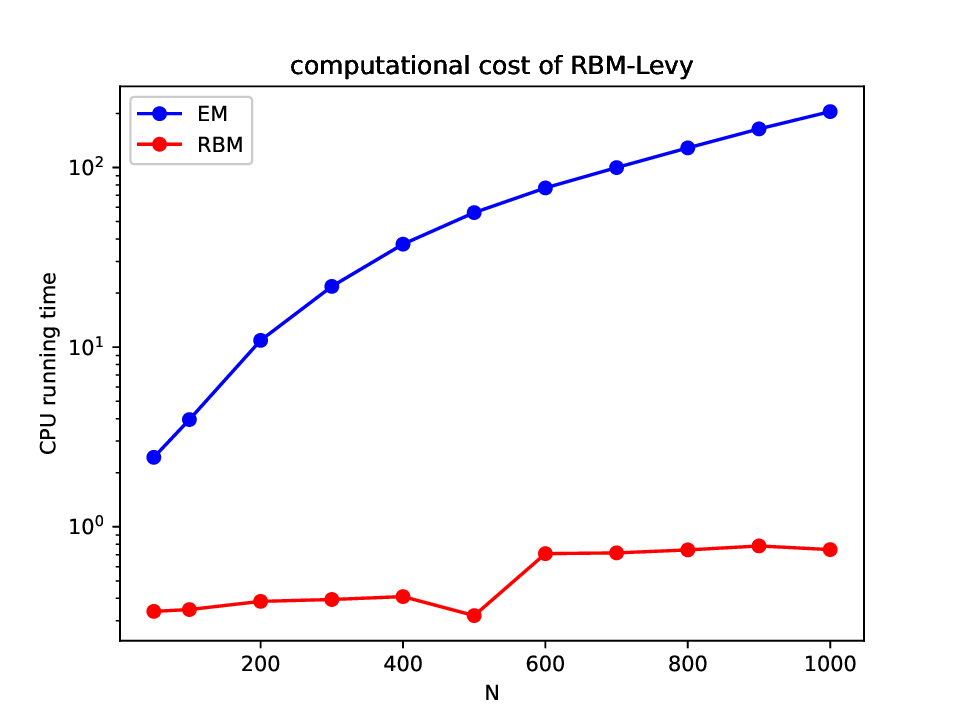}
        \caption{ }
        \label{fig:rvmlevy1d_cost}
    \end{subfigure}
    \caption{\textbf{(a):} Error of RBM-L\'evy versus time $T$ with configuration $T \in \{1,2,4,8,16 \}$. $\kappa = 2^{-7}$, $N=100$. \textbf{(b):} CPU running time versus particle number $N$. $a=1$, $\kappa = 2^{-7}$, $T=1$.}
    \label{fig:time_cost}
\end{figure}

\subsection{Application to stochastic Cucker-Smale model}\label{sec:sCS}
The Cucker-Smale model proposed by Cucker and Smale \cite{cucker2007emergent,cucker2007mathematics} applies a nonlinear second-order system to describe the behavior of particles swarm with Newton-type interaction. It can effectively describe the flocking behaviors of animals such as birds, fishes, and ants. The word ``flocking"  refers to general phenomena where autonomous agents reach a consensus based on limited environmental information and simple physical / social rules \cite{ha2009simple,ha2008particle,ha2021uniform}. Here we consider the following stochastic Cucker-Smale model, which can capture the random fluctuation in practice more precisely \cite{feng2024stochastic}:
\begin{equation}
\begin{aligned}
& dx_i(t) = v_i(t) dt,\\
& dv_i(t) = \frac{\theta}{N}\sum_{j=1}^N \Phi(|x_j(t) - x_i(t)|)(v_j(t) - v_i(t)) dt + (v_i(t-) - v_c(t))dL_i(t),
\end{aligned}
\end{equation}
where $X_i, v_i \in \mathbb{R}$, $\theta \in \mathbb{R}_{+}$, $v_c(t) := \frac{1}{N}\sum_{j=1}^N v_i(t)$,  $L_i(t) (1 \leq i \leq N)$ are independent L\'evy processes with characteristics $(0, \sigma^2, \nu(\cdot))$, and the interaction kernel is of the form:
\begin{equation}\label{eq:CSkernel}
    \Phi(r) := \frac{1}{(1 + r^2)^\beta}.
\end{equation}
It is known that when $\beta \in (0,\frac{1}{2}]$, the deterministic Cucker-Smale is unconditional flocking \cite{ha2009simple,ha2008particle,ha2021uniform}. Our experiment reveals that the Random Batch Method can effectively simulate the stochastic Cucker-Smale model whether the model is flocking or unflocking by choosing
\begin{equation}
    \beta = 5, \quad \theta = 1.
\end{equation}
and different settings of the noises. We also choose $\nu$ to be L\'evy measure of some compound Poisson process:  
\begin{equation}
    \nu(dz) = \lambda \cdot \frac{1}{\sqrt{2\pi}}\exp(-\frac{|z|^2}{2})dz
\end{equation}
for some $\lambda \geq 0$. Moreover, for each particle, the $x$-intitial distribution is i.i.d. chosen to be $\rho_0(x)$ (recall $\rho_0(x)$ in \eqref{eq:rho0}, and the $v$-initial distribution is i.i.d. chosen to be proportional to $\rho_0(0.1x)$.

In our experiments, we set the particle number $N=2^4$. The stochastic Cucker-Smale dynamics (with or without random batch on the drift) are simulated using Euler's scheme, and the time step for Euler scheme is $\tau = 2^{-8}$, the time step for choosing the random batch is $\kappa = 2^{-4}$, the batch size is $p=2$.

As we can see from Figure \ref{fig:BMjump} below, when $\sigma = 1$, $\lambda = 0.1$ (white noise + jump), the model is flocking and the Random Batch Method can effectively simulate the Cucker-Smale dynamics with much lower computational cost. Note that $D_x(t)$ and $D_v(t)$ are defined by
\begin{equation}
    D_x(t) := \max_{i,j}|x_i(t) - x_j(t)|,\quad D_v(t) := \max_{i,j}|v_i(t) - v_j(t)|.
\end{equation}
In the Appendix, we give some additional results under different settings of the noise, which reveals that the Random Batch Method can effectively simulate the model and preserve the flocking / unflocking nature of the system.

\begin{figure}[htbp]
\centering
        \begin{subfigure}[b]{0.4\textwidth}
            \includegraphics[width=\textwidth]{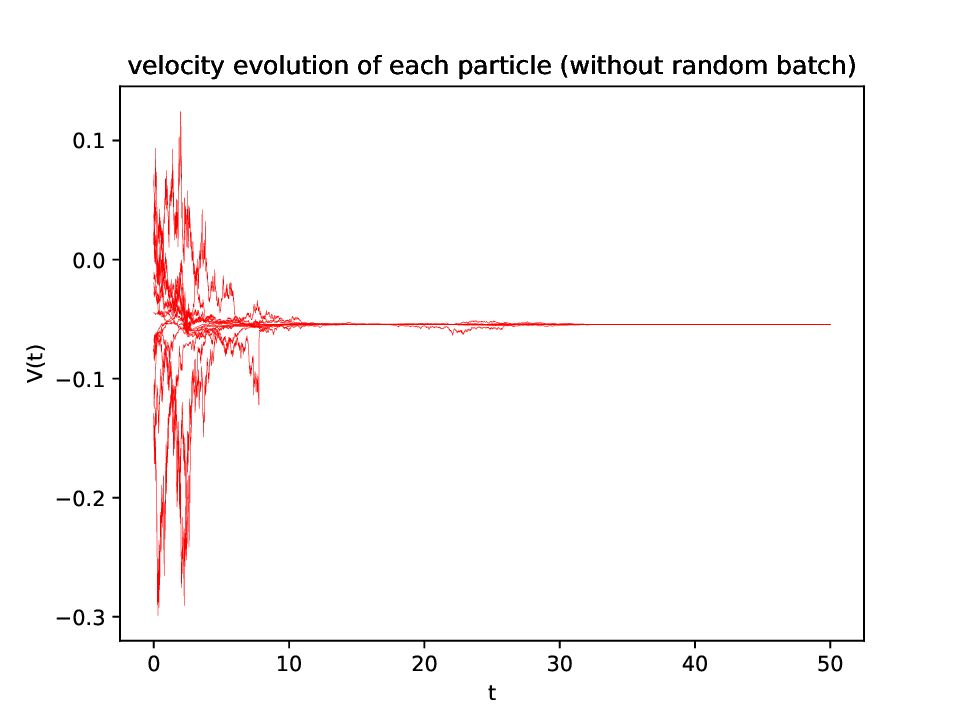}
            \caption{}
            \label{fig:image1}
        \end{subfigure}
        \quad
        \begin{subfigure}[b]{0.4\textwidth}
            \includegraphics[width=\textwidth]{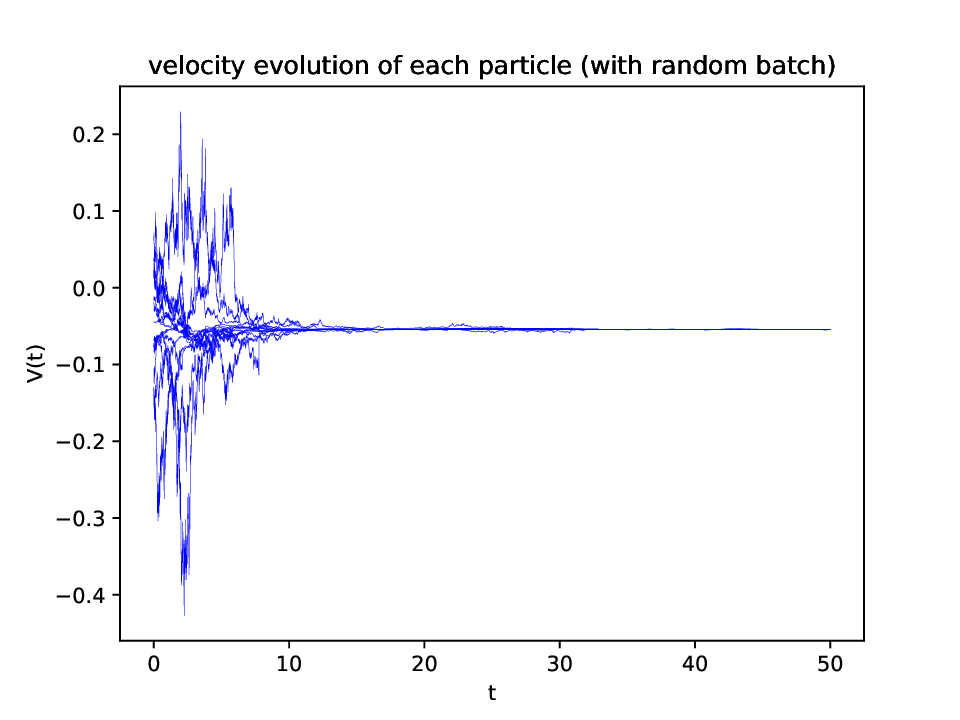}
            \caption{}
            \label{fig:image2}
        \end{subfigure}
\vskip\baselineskip
        \begin{subfigure}[b]{0.4\textwidth}
            \includegraphics[width=\textwidth]{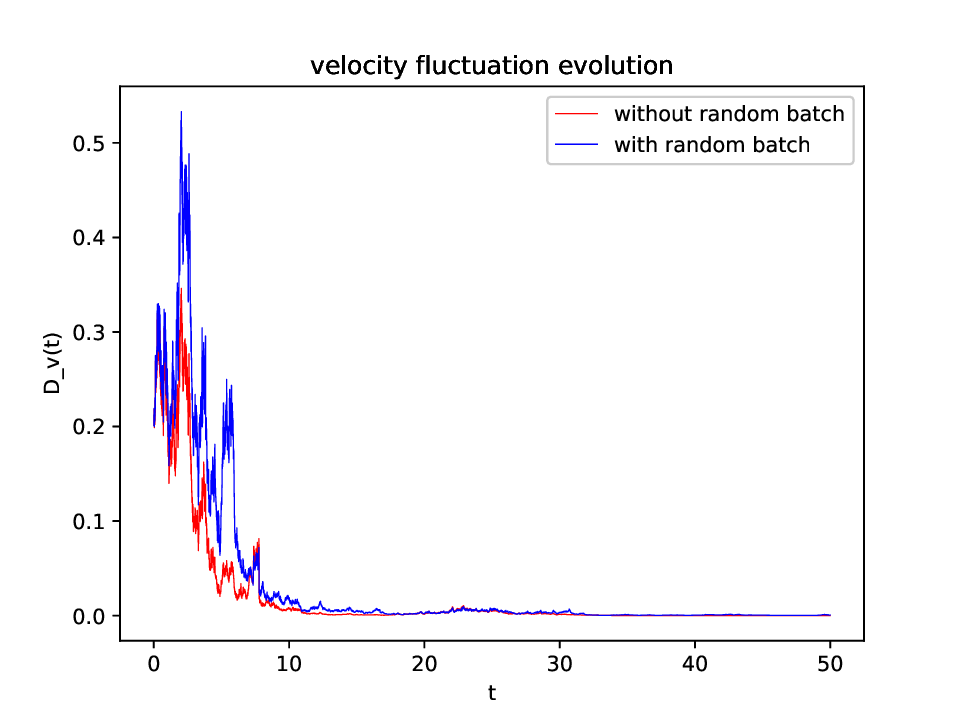}
            \caption{}
            \label{fig:image1}
        \end{subfigure}
        \quad
        \begin{subfigure}[b]{0.4\textwidth}
            \includegraphics[width=\textwidth]{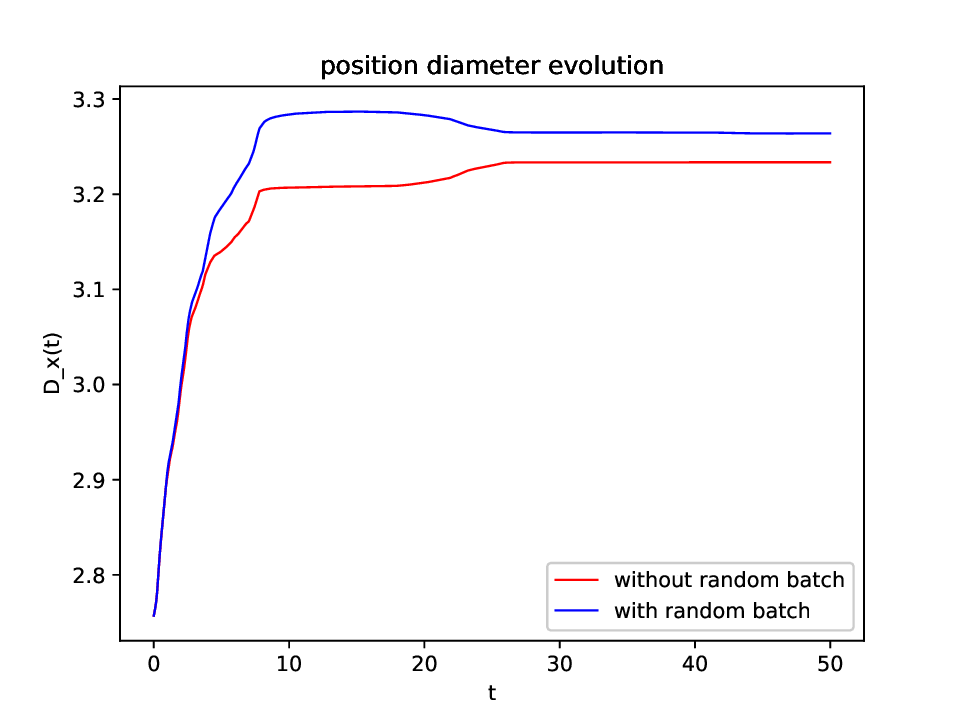}
            \caption{}
            \label{fig:image2}
        \end{subfigure}
        \caption{$\sigma = 1$, $\lambda = 0.1$ (white noise + jump). \textbf{flocking.} \textbf{(a):} velocity evolution of each particle without randam batch. \textbf{(b):} velocity evolution of each particle with random batch. \textbf{(c):} time evolution of $D_v(t)$ \textbf{(d):} time evolution of $D_x(t)$. 
        }
        \label{fig:BMjump}
\end{figure}

\section{Conclusion}
In this paper, we proposed the Random Batch Method for interacting particle systems driven by L\'evy noises (RBM-L\'evy), which can be viewed as an extension of the original RBM algorithm in \cite{jin2020random}. In our RBM-L\'evy algorithm, one randomly shuffles the $N$ particles into small batches of size $p$, and interacts particles only within each batch for a short period of time. Then one repeats this shuffle-and-interact procedure. Consequently, RBM-L\'evy dramatically reduces the computational cost from $O(N^2)$ to $O(pN)$ per time step, while still ensuring the convergence to the original interacting particle system, even when the noise allows jumps. Under the assumptions of both finite or infinite second moment of the L\'evy measure, we gave rigorous proof of this convergence in terms of Wasserstein distances. Typical applications include systems with $\alpha$-stable L\'evy noises for $\alpha \in (1,2)$. Remarkably, we are not including the theoretical results for $\alpha \in (0,1)$. This is mainly because our proof is based on the coupling technique, and it is known that an $\alpha$-stable process cannot have finite $\alpha'$-th moment for $\alpha'$ larger than $\alpha$. This then leads us to obtain a Wasserstein-$p$ convergence result for $p < \alpha$, but the Wasserstein-$p$ distance itself is not a true distance for $p < 1$. On the other hand, we do believe that it is possible to obtain a (sharp) Wasserstein convergence result using some other advanced PDE-based techniques instead of the current coupling method. We leave it as a nontrivial future work. Some numerical examples are given to verify our convergence rate and show the applicability of RBM-L\'evy. Some other possible future work may include extensions to particle systems with L\'evy noises and singular interaction kernels, and to particle systems with multiplicative noises with jumps.

\section*{Acknowledgement}
The research of J.-G. L. is partially supported under the NSF grant DMS-2106988. The authors would like to thank the editors and the anonymous reviewers for helpful comments and suggestions.

\appendix

\section{Some basics of L\'evy processes}\label{app:levy}

In this section, we refer to \cite{applebaum2009levy} for some basic definitions in probability theory associated with the L\'evy process. We refer to \cite{cohen2015stochastic} for more general jump processes. Let $L=(L(t),t\geq 0)$ be a stochastic process defined on a probability space $(\Omega, \mathcal{F}, P)$. We say that $L_t$ is a L\'evy process if
\begin{enumerate}
    \item $L(0) = 0$ a.s.;
    \item $L$ has independent and stationary increments;
    \item $L$ is continuous in probability, i.e. for all $a > 0$, $s \geq 0$,
    \begin{equation*}
        \lim_{t\rightarrow s}P\left(|L(t) - L(s)| > a \right) = 0.
    \end{equation*}
\end{enumerate}
Note that in the presence of the first two conditions, the third condition is equivalent to 
\begin{equation*}
        \lim_{t\rightarrow 0}P\left(|L(t)| > a \right) = 0.
\end{equation*}
Also, the second condition implies that for fixed $t \geq 0$, the random variable $L(t)$ is infinitely indivisible. This means for any $n \in \mathbb{N}_{+}$, there exists i.i.d. random variables $Y_1^{(n)},\dots, Y_n^{(n)}$ such that $L(t)$ equals to $\sum_{j=1}^n Y^{(n)}_j$ in the sense of distribution. Moreover, the characteristic function of $L(t)$ has the following L\'evy-Khintchine representation (here we consider the 1-dimension case L\'evy process for simplicity throughout this section):
\begin{equation*}
    \mathbb{E}\left[\exp(iu L(t))\right] = \exp(t\psi(u)),\quad \forall u \in\mathbb{R},
\end{equation*}
where the characteristic exponent of $L(t)$ is given by
\begin{equation}
    \psi(u) = i b u - \frac{1}{2} \sigma^2 u^2 + \int (e^{iu\cdot z} - 1 - iu z\textbf{1}_{\{ |z|\leq 1\}})\nu(dz).
\end{equation}
Here $b, \sigma \in \mathbb{R}$,  and $\nu$ is the L\'evy measure of $L$ satisfying
\begin{equation*}
    \nu(\{ 0\}) = 0,\quad \int 1 \wedge |z|^2 \nu(dz) < \infty.
\end{equation*}
Correspondingly, for any L\'evy process $L(t)$, we have the following It\^o-L\'evy decomposition:
\begin{equation}
    L(t) = b t + \sigma B_t + \int_0^t \int_{|z|<1} z\tilde{N}(ds,dz) + \int_0^t \int_{|z|\geq 1} zN(ds,dz).
\end{equation}
where $N(dt,dz)$ is the Poisson measure and $\tilde{N}(dt,dz):= N(dt,dz) - \mathbb{E}N(dt,dz) = N(dt,dz) - \nu(dz)dt$ is the compensated Poisson measure. Above, the triple $(b, \sigma^2, \nu)$ is often called the characteristic of a L\'evy process. Moreover, the jump process above as a semimartingale also satisfies the It\^o's formula. In detail, letting $X$ be a semimartingale and $f$ be a twice differentiable function, we have
\begin{equation}
    f(X_t) = f(X_0) + \int_0^t f'(X_{s-}) dX_s + \frac{1}{2}\int_0^tf''(X_s)d\langle X^c \rangle_s + \sum_{0 < s \leq t}(f(X_s) - f(X_{s}) - f'(X_{s-})\Delta X_s),
\end{equation}
where $X^c$ denotes the continuous part of the semimartingale $X$. In particular, if $X$ solves the following  SDE driven by a (pure jump) L\'evy process with characteristic $(0,0,\nu)$:
\begin{equation}
    X_t = X_0 + \int_0^t b(X_s) ds +  L(t), \quad t \geq 0,
\end{equation}
then 
\begin{equation}
\begin{aligned}
    f(X_t) &= f(X_0) + \int_0^t f'(X_s) b(X_s) ds +  \int_0^t \int_{\mathbb{R}} (f(X_{s-} + z) - f(X_{s-})) \tilde{N}(dz,ds)\\
    &\quad +  \int_0^t \int_{|z| \geq 1} (f(X_{s-} + z) - f(X_{s-})) \nu(dz) ds\\
    &\quad +  \int_0^t \int_{|z| < 1} (f(X_{s-} + z) - f(X_{s-}) - f'(X_{s-})z)\nu(dz) ds.
\end{aligned}
\end{equation}

\subsection{Rotational invariant $\alpha$-stable L\'evy processes}
A real-valued random variable $X$ is called stable if there exists real-valued sequences $(c_n)_n$, $(d_n)_n$ with each $c_n \geq 0$ such that
\begin{equation}
    X_1 + X_2 + \dots + X_n \overset{d}{=} c_n X + d_n,
\end{equation}
where $X_1, \dots, X_n$ are $n$ independent of $X$, and it can be proved that the only choice of the sequence $c_n$ is $C_n = \sigma n^{\frac{1}{\alpha}}$ for some $\sigma > 0$ and $\alpha \in (0,2]$ being its index of stability [\cite{feller1991introduction}, pp.166]. Alternatively, a stable process can be defined via its characteristic function: A real-valued random variable $X$ is stable if and only if there exist $\sigma > 0$ , $\beta \in [-1,1]$, $\alpha \in (0,2]$ and $b \in \mathbb{R}$ such that
the characteristic function of $X$ is given by
\begin{equation}\label{eq:stablecharacteristic}
\psi(u)=\mathbb{E}\left[e^{i u X}\right]= \begin{cases}\exp \left(i b u-\frac{1}{2} \sigma^2 u^2\right), & \text { if } \alpha=2, \\ \exp \left\{i b u-\sigma^\alpha|u|^\alpha\left[1-i \beta \operatorname{sgn}(u) \tan \left(\frac{\pi \alpha}{2}\right)\right]\right\}, & \text { if } \alpha \neq 1,2, \\ \exp \left\{i b u-\sigma|u|\left[1+i \beta \frac{2}{\pi} \operatorname{sgn}(u) \log (|u|)\right]\right\}, & \text { if } \alpha=1\end{cases}
\end{equation}
In particular, $X$ is called rotational invariant $\alpha$-stable if $\beta = b = 0$, namely, its L\'evy symbol $\eta(u) := \log(\psi(u))$ satisfies
\begin{equation}
    \eta(u) = -\tilde{\sigma}^\alpha |u|^\alpha,
\end{equation}
where $\tilde{\sigma} = \sigma$ for $\alpha \neq 2$ and $\tilde{\sigma} = \sigma / \sqrt{2}$ if $\alpha = 2$. It is also known that $X$ has finite $p$-th moment when $\alpha > p$.

Now, for $\alpha \in (0,2)$, a real-valued L\'evy process $L_\alpha(t)$ is called $\alpha$-stable if for any $t \geq 0$, the random variable $L(t)$ is stable. In particular, in this paper we focus on the rotational invariant $\alpha$-stable L\'evy process, where the L\'evy symbol of $L(1)$ is $\eta(u) = -|u|^\alpha$ (we take $\sigma = 1$ in \eqref{eq:stablecharacteristic} for simplicity). Consequently, the characteristics of $L_\alpha(t)$ is $(0,0,\nu_\alpha)$, where
\begin{equation}
    \nu(dz) = C_{\alpha} |z|^{-(1+\alpha)}dz,\quad C_{\alpha} := \frac{2^{\alpha-1} \alpha \Gamma((1+\alpha) / 2)}{\pi^{2} \Gamma(1-\alpha / 2)}.
\end{equation}
Furthermore, it is well-known that the rotational invariant $\alpha$-stable L\'evy process corresponds to the fractional Laplacian operator $(-\Delta)^{\frac{\alpha}{2}}$, which is a non-local one and can be defined through the Fourier transform (see for instance, [\cite{stein1970singular}, Chapter 5]):
\begin{equation}
    (-\Delta)^{\frac{\alpha}{2}} \rho (x) := \mathcal{F}^{-1}(|\xi|^\alpha \hat{\rho}(\xi))(x),\quad \hat{\rho}(\xi) = \mathcal{F}(\rho(x))(\xi).
\end{equation}
In detail, the law $\rho_t$ of $L_\alpha(t)$ solves a fractional heat equation:
\begin{equation}
    \partial_t \rho = - (-\Delta)^{\frac{\alpha}{2}} \rho.
\end{equation}
Clearly, this reduces to the classical heat equation when $\alpha = 2$, namely, $L_\alpha(t)$ is a Brownian motion.

\section{Additional results for the stochastic Cucker-Smale model}

In this section, we provide more results of the random batch simulation of the stochastic Cucker-Smale model discussed in Section \ref{sec:sCS}. In detail, we consider another three different settings (two of them are not flocking): (1) $\sigma = 1$, $\lambda = 0$ (white noise only, Figure \ref{fig:BM}); (2) $\sigma = 0$, $\lambda = 0.1$ (jump only, Figure \ref{fig:jump}); (3) $\sigma = 0$, $\lambda = 0$ (deterministic, Figure \ref{fig:det}). The particle number is $N=2^4$. The stochastic Cucker-Smale dynamics (with or without random batch on the drift) are simulated using Euler's scheme, and the time step for Euler scheme is $\tau = 2^{-8}$, the time step for choosing the random batch is $\kappa = 2^{-4}$, the batch size is $p=2$.  

As we can observe from Figures \ref{fig:BM} - \ref{fig:det}, the existence of Brownian motion somehow determines the flocking or unflocking behavior of the system. To our knowledge, this phenomenon has no theoretical explanations under the current choice of the interaction kernel in \eqref{eq:CSkernel}. The random batch approach, in the current settings, can effectively simulate the original model and capture the flocking / unflocking nature, with much cheaper computational cost. Furthermore, inspired by these observations, it is meaningful to explore the nature of the fact that continuous noise makes flocking easier, which is of independent interest.

\begin{figure}[htbp]
\centering
        \begin{subfigure}[b]{0.4\textwidth}
            \includegraphics[width=\textwidth]{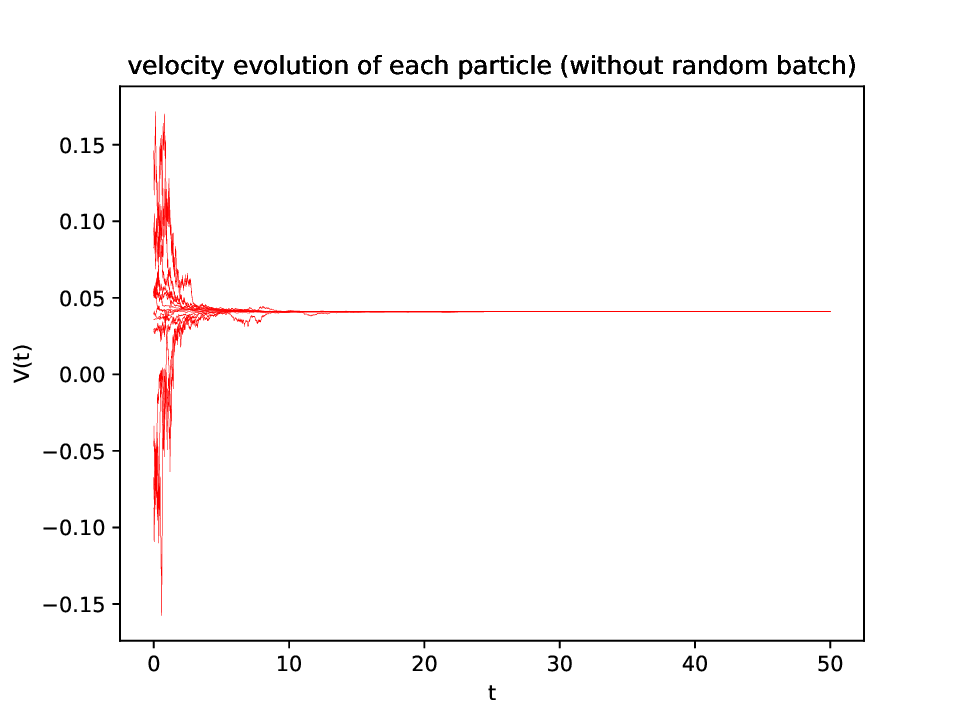}
            \caption{}
            \label{fig:image1}
        \end{subfigure}
        \quad
        \begin{subfigure}[b]{0.4\textwidth}
            \includegraphics[width=\textwidth]{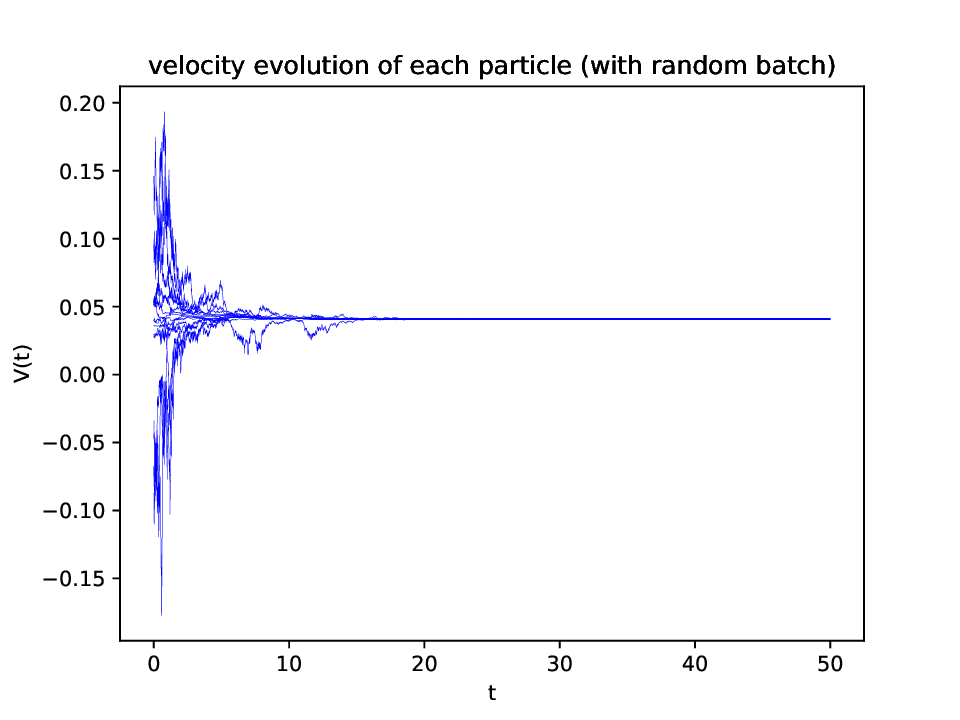}
            \caption{}
            \label{fig:image2}
        \end{subfigure}
\vskip\baselineskip
        \begin{subfigure}[b]{0.4\textwidth}
            \includegraphics[width=\textwidth]{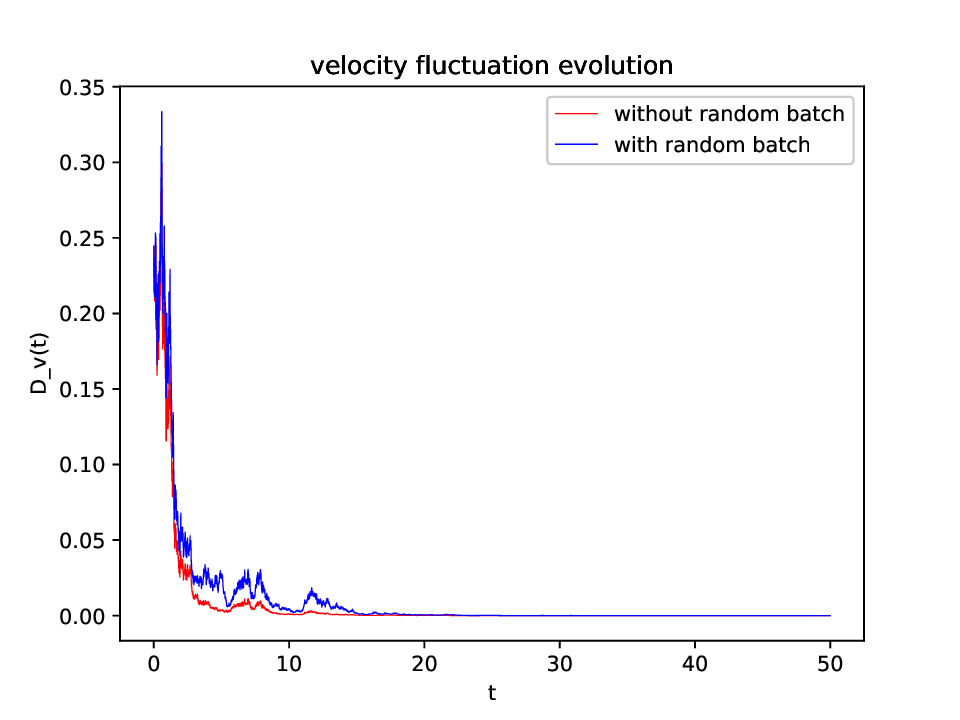}
            \caption{}
            \label{fig:image1}
        \end{subfigure}
        \quad
        \begin{subfigure}[b]{0.4\textwidth}
            \includegraphics[width=\textwidth]{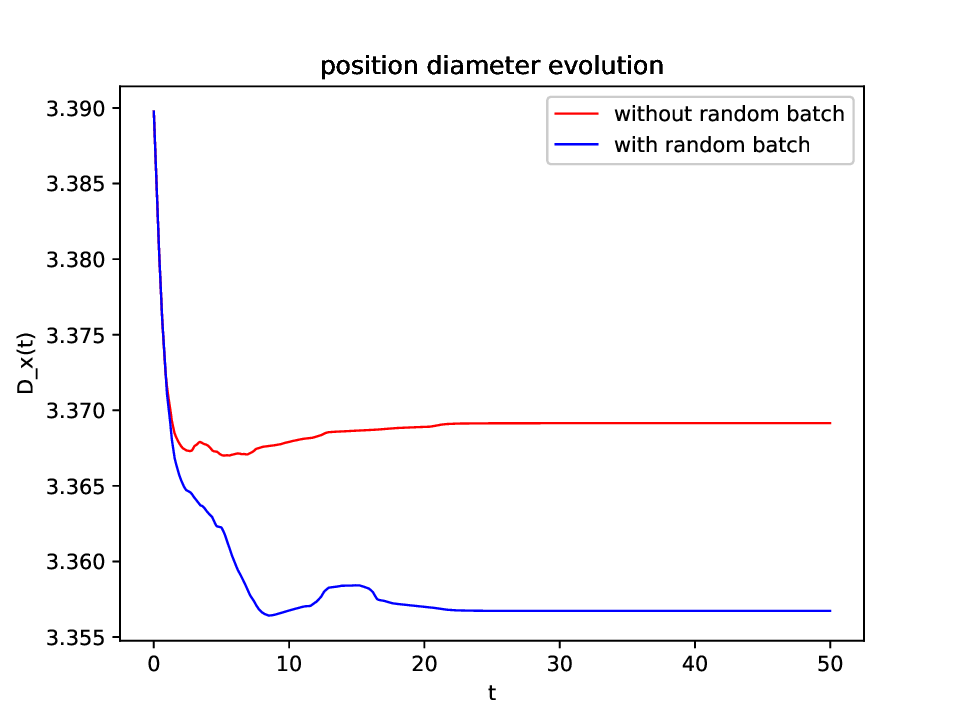}
            \caption{}
            \label{fig:image2}
        \end{subfigure}
        \caption{$\sigma = 1$, $\lambda = 0$ (white noise only). \textbf{flocking.} \textbf{(a):} velocity evolution of each particle without randam batch. \textbf{(b):} velocity evolution of each particle with random batch. \textbf{(c):} time evolution of $D_v(t)$ \textbf{(d):} time evolution of $D_x(t)$. }
        \label{fig:BM}
\end{figure}

\begin{figure}[htbp]
\centering
        \begin{subfigure}[b]{0.4\textwidth}
            \includegraphics[width=\textwidth]{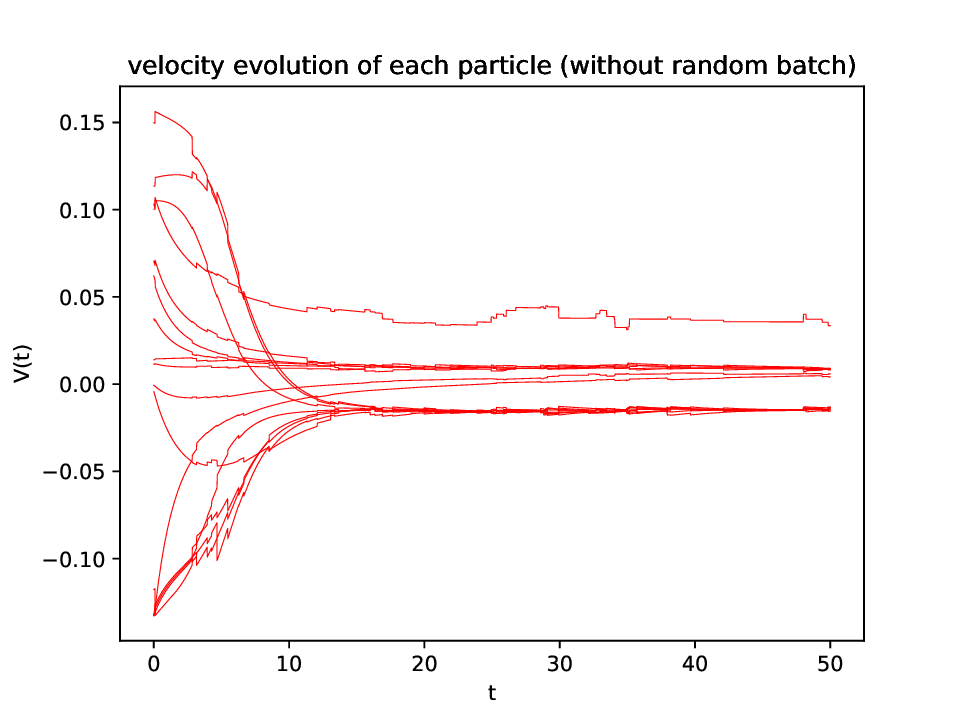}
            \caption{}
            \label{fig:image1}
        \end{subfigure}
        \quad
        \begin{subfigure}[b]{0.4\textwidth}
            \includegraphics[width=\textwidth]{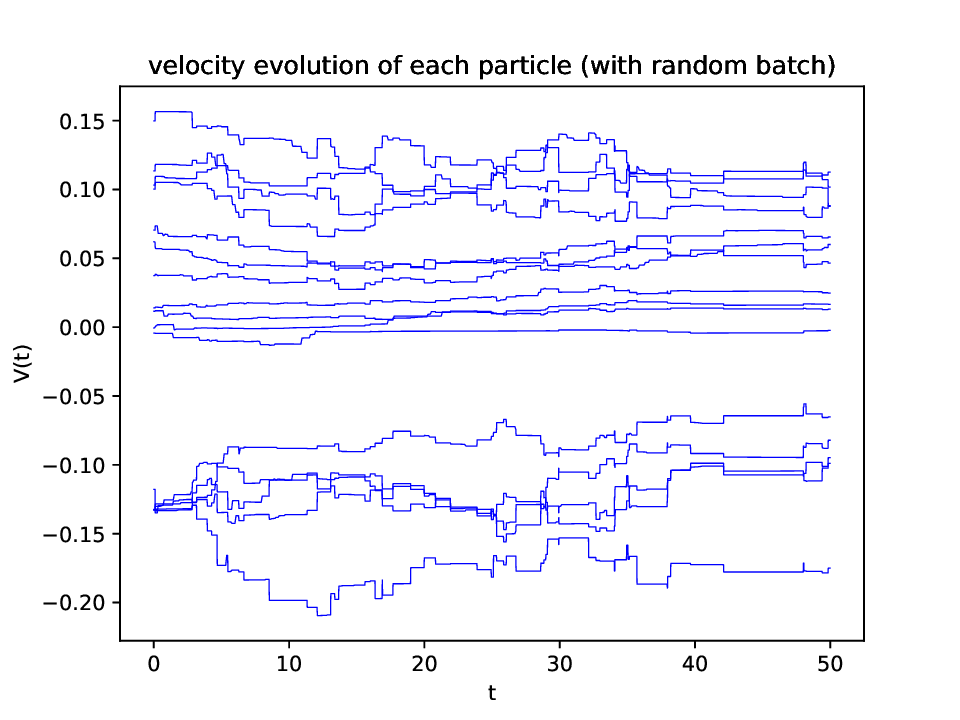}
            \caption{}
            \label{fig:image2}
        \end{subfigure}
\vskip\baselineskip
        \begin{subfigure}[b]{0.4\textwidth}
            \includegraphics[width=\textwidth]{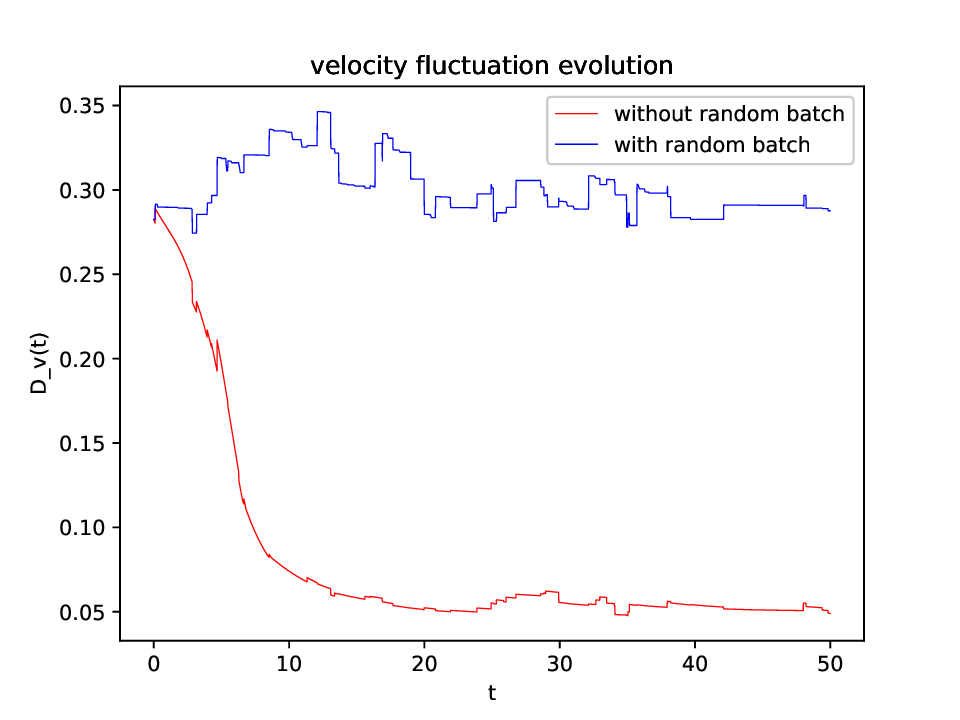}
            \caption{}
            \label{fig:image1}
        \end{subfigure}
        \quad
        \begin{subfigure}[b]{0.4\textwidth}
            \includegraphics[width=\textwidth]{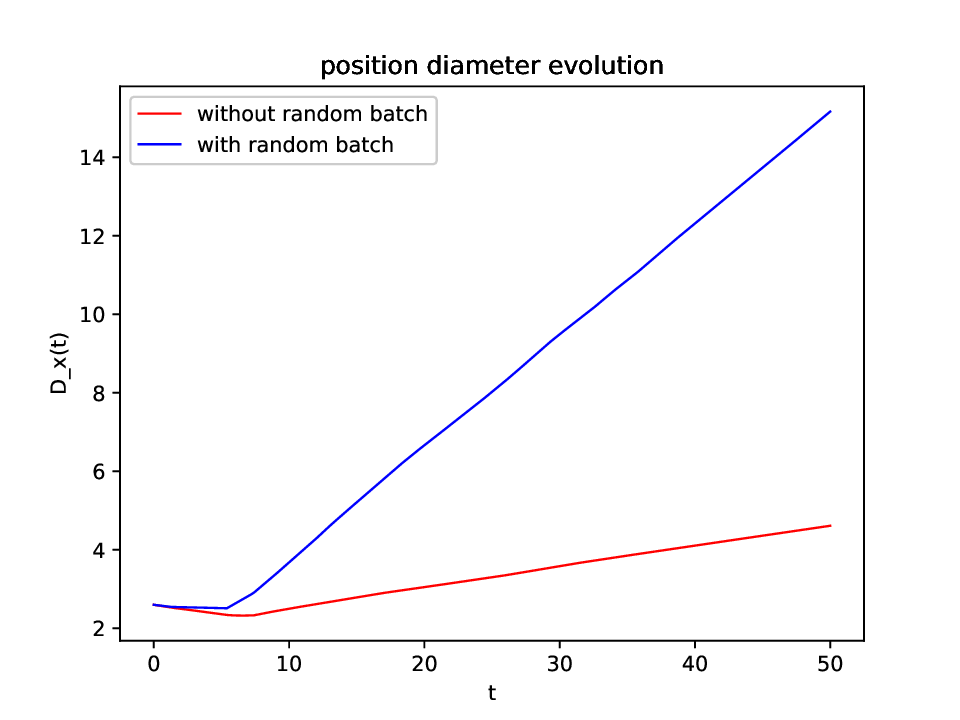}
            \caption{}
            \label{fig:image2}
        \end{subfigure}
        \caption{$\sigma = 0$, $\lambda = 0.1$ (jump only). \textbf{unflocking.} \textbf{(a):} velocity evolution of each particle without randam batch. \textbf{(b):} velocity evolution of each particle with random batch. \textbf{(c):} time evolution of $D_v(t)$ \textbf{(d):} time evolution of $D_x(t)$. }
        \label{fig:jump}
\end{figure}

\begin{figure}[htbp]
\centering
        \begin{subfigure}[b]{0.4\textwidth}
            \includegraphics[width=\textwidth]{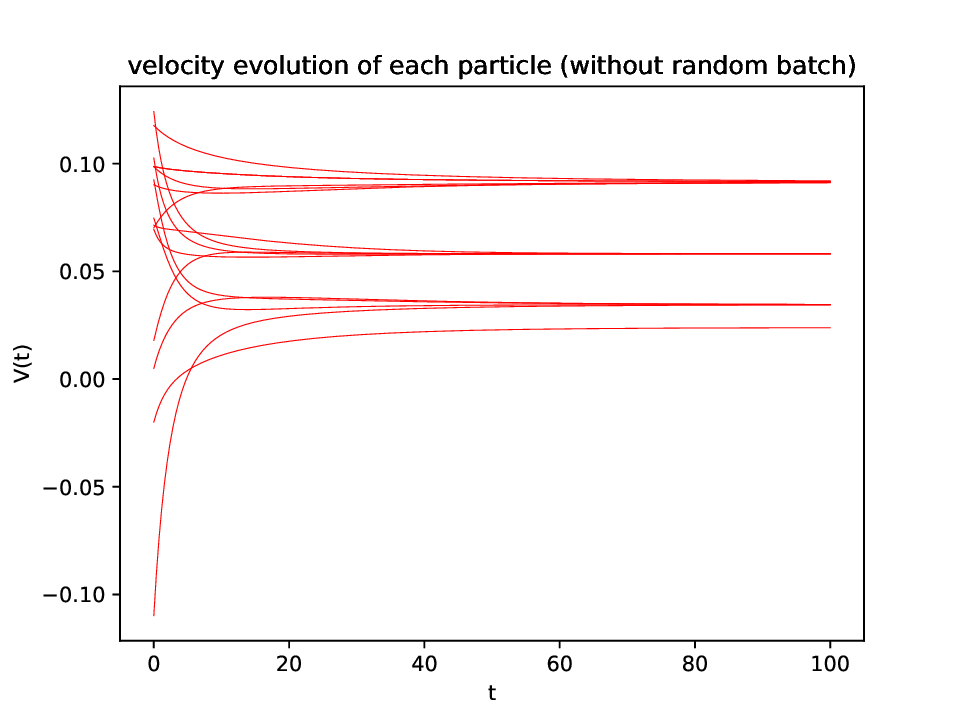}
            \caption{}
            \label{fig:image1}
        \end{subfigure}
        \quad
        \begin{subfigure}[b]{0.4\textwidth}
            \includegraphics[width=\textwidth]{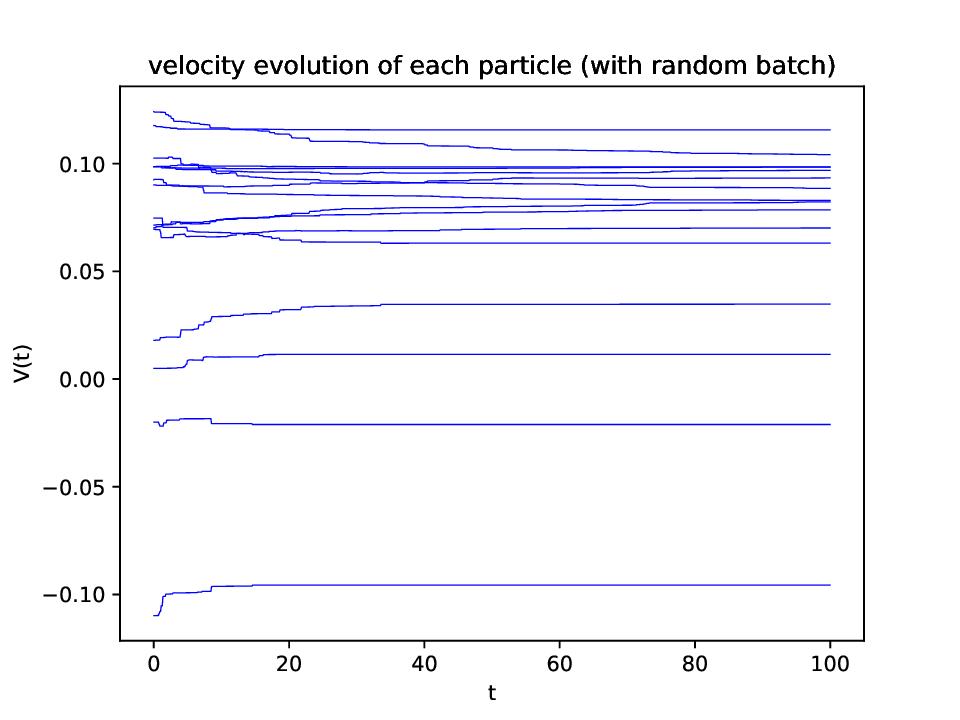}
            \caption{}
            \label{fig:image2}
        \end{subfigure}
\vskip\baselineskip
        \begin{subfigure}[b]{0.4\textwidth}
            \includegraphics[width=\textwidth]{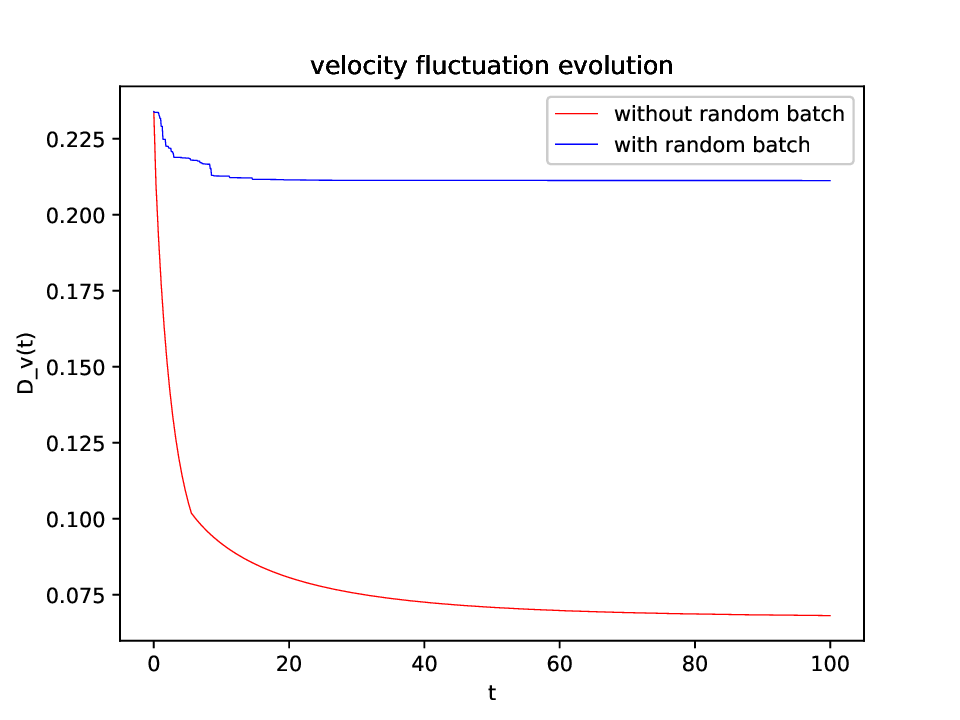}
            \caption{}
            \label{fig:image1}
        \end{subfigure}
        \quad
        \begin{subfigure}[b]{0.4\textwidth}
            \includegraphics[width=\textwidth]{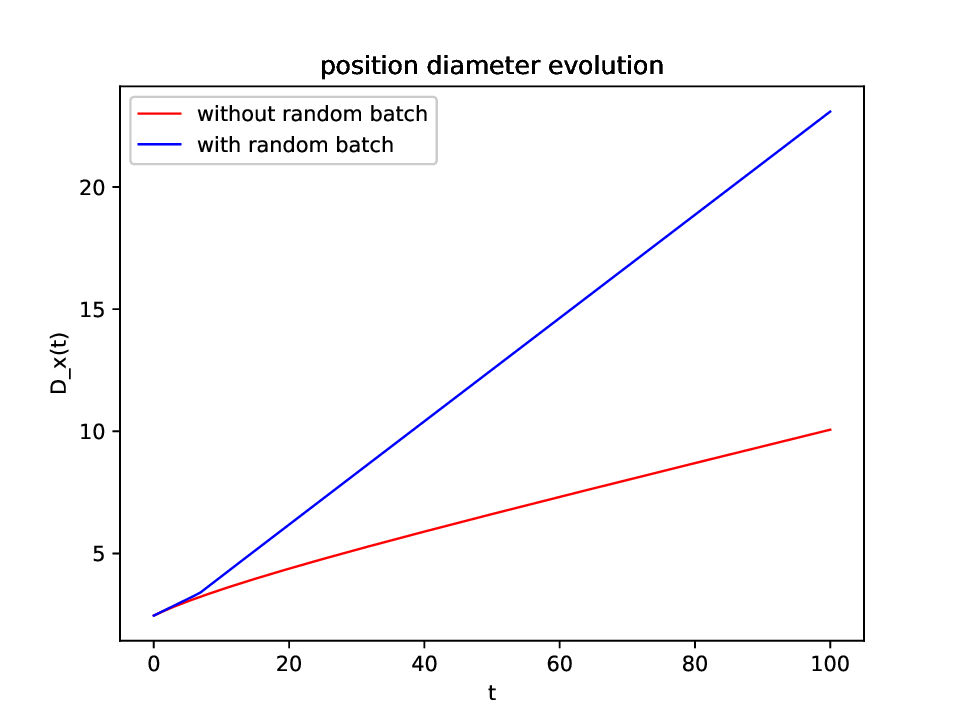}
            \caption{}
            \label{fig:image2}
        \end{subfigure}
        \caption{$\sigma = 0$, $\lambda = 0$ (deterministic). \textbf{unflocking.} \textbf{(a):} velocity evolution of each particle without randam batch. \textbf{(b):} velocity evolution of each particle with random batch. \textbf{(c):} time evolution of $D_v(t)$ \textbf{(d):} time evolution of $D_x(t)$. }
        \label{fig:det}
\end{figure}

\newpage
\bibliographystyle{plain}
\bibliography{main}

\end{document}